\documentclass{article}
\usepackage[hmargin=2.5cm,vmargin=3cm,bindingoffset=0.5cm]{geometry}

\usepackage{fancyhdr,graphics,graphicx,xy}
\usepackage{ifthen}
\usepackage{amssymb,amsmath,mathtools,enumerate,tikz-cd}
\usepackage{amsthm}
\usepackage{graphics}
\usepackage{dsfont}
\usepackage{algpseudocode}
\usepackage{adjustbox}
\usepackage{bbm}
\usepackage{tikz}
\usepackage[affil-it]{authblk}
\usepackage{circuitikz}
\usepackage[permil]{overpic}

\usetikzlibrary{arrows}
\usetikzlibrary{snakes}
\graphicspath{/Users/huubdejong/Downloads/}
\def\N{\mathbb{N}}
\def\Q{\mathbb{Q}}
\def\C{\mathbb{C}}
\def\R{\mathbb{R}}

\def\Z{\mathbb{Z}}

\def\T{\mathbb{T}}

\def\M{\mathcal{M}}

\def\mM{\M}

\def\mJ{\mathcal{J}}

\def\limk{\lim_{k\to\infty}}

\def\diam{\operatorname{diam}}
\def\lcm{\operatorname{lcm}}

\newcommand{\brac}[1]{\left({#1}\right)}

\newcommand{\abs}[1]{\left\vert{#1}\right\vert}
\newcommand{\Set}[1]{\left\{ #1 \right\}}
\newcommand{\set}[1]{\{ #1 \}}
\newcommand{\mtr}[1]{\begin{pmatrix} #1 \end{pmatrix}}

\newcommand{\norm}[1]{\left\|{#1}\right\|}

\DeclareMathOperator{\proj}{proj}

\title{Smoothness of Markov Partitions for Expanding Toral Endomorphisms}
\author{Chayce Hughes%
  \thanks{Electronic address: \texttt{chahug21@student.ubc.ca}}\;}
\author{Huub de Jong%
  \thanks{Electronic address: \texttt{huub.dejong@math.ubc.ca}}}

\affil{The University of British Columbia, Vancouver}
\date{\today}

\newtheorem{theorem}{Theorem}[section]
\newtheorem{corollary}[theorem]{Corollary}
\newtheorem{definition}[theorem]{Definition}
\newtheorem{lemma}[theorem]{Lemma}

\newtheorem{proposition}[theorem]{Proposition}
\newtheorem{remark}[theorem]{Remark}
\newtheorem{conjecture}[theorem]{Conjecture}

\begin{document}

\maketitle

\begin{abstract}
    We show that an expanding toral endomorphism in dimension 2 admits a smooth (in fact linear) Markov partition if and only if some power of the corresponding integer matrix is diagonalizable with integer eigenvalues. We exhibit examples of qualitatively different smoothness behavior, and highlight the existence of a ``hybrid'' type of smoothness in dimension 2. For dimension $d$, we show that expanding toral endomorphisms satisfying the eigenvalue condition above admit a linear Markov partition. Finally, we provide an estimate on the Hausdorff dimension of the boundary of a Markov partition using techniques from symbolic dynamics.
\end{abstract}

\section{Introduction} \label{Section:Intro}

Markov partitions were developed by Adler and Weiss for hyperbolic toral automorphisms \cite{AW}, and systematically studied by Sinai in the context of Anosov diffeomorphisms on manifolds \cite{Si}. The utility of Markov partitions comes from the fact that they give rise to a symbolic description of the system which retains large amounts of interesting information. A great deal of advancment in the theory on Markov partitions is due to Bowen, who used these partitions as a tool in better understanding the more general class of Axiom A diffeomorphisms \cite{Bo1}. He showed that such diffeomorphisms admit a Markov partition and used them to show several dynamical properties, such as the 0-dimensionality of minimal sets \cite{Bo3}. 

A particularly beautiful 3-page paper of Bowen \cite{Bo2} shows that no Markov partition for a hyperbolic toral automorphism in dimension 3 can have a piecewise smooth boundary. This result was later extended by Cawley \cite{C} to completely classify when a hyperbolic toral automorphism admits a Markov partition with piecewise smooth boundary, and to show that a piecewise smooth boundary must in fact be linear. 

\begin{theorem}[Cawley, 1991]\label{thm:Cawley}
    A hyperbolic toral automorphism $f:\T^d\to\T^d$ induced by a matrix $A$ admits a smooth Markov partition if and only if some power of $A$ is similar over $\Q$ to a block diagonal matrix $$\begin{pmatrix}
        L_1 & & & \\
        & L_2 & & \\
        & & \ddots & \\
        & & & L_{d/2}
    \end{pmatrix}$$
    where each $L_i$ is a $2\times 2$ hyperbolic integer matrix with $\det L_i = \pm1$. 

    Moreover, any smooth Markov partition for a hyperbolic toral automorphism must be linear.
\end{theorem}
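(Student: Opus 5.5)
The plan is to handle the two directions separately and to obtain linearity as a by-product of the necessity argument. For sufficiency, assume $A^n$ is $\Q$-similar to $\mathrm{diag}(L_1,\dots,L_{d/2})$ with each $L_i$ a hyperbolic $2\times 2$ integer matrix of determinant $\pm1$. Any Markov partition for $f^n$ can be refined to one for $f$ by intersecting the images $f^{-j}\mathcal{R}$ for $0\le j<n$. This refinement preserves linearity of the boundaries, so it suffices to treat $A^n$.

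Rational similarity gives a lattice $\Lambda'=\bigoplus_i \Lambda_i$, with $\Lambda_i\subset \Q^2$ in the $i$-th block, such that $\Lambda'$ is commensurable with $\Z^d$ and invariant under the block matrix. Passing to a common sublattice and a further power of $A$, I will get a finite covering $\T^d\to \T^2\times\cdots\times\T^2$ semiconjugating $f^{n'}$ to a product of two-dimensional hyperbolic automorphisms. In the other direction, Adler–Weiss gives each $\T^2$ factor a Markov partition by parallelograms with sides along the eigenlines. Products of these are Markov for the product map and have linear boundary. Lifting them through the finite covering map, which is a local isometry, and cutting the lifts into pieces of small diameter should give a linear Markov partition on $\T^d$. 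The Markov property needs checking here, because connected components of preimages of rectangles are rectangles.

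For necessity, I will follow Bowen and Cawley. Suppose $\mathcal{R}$ is a Markov partition whose boundary is piecewise smooth. The boundary $\partial\mathcal{R}$ decomposes as $\partial^s\cup\partial^u$, where $\partial^s$ is forward invariant, $f(\partial^s)\subset\partial^s$, and is a union of pieces of stable leaves, and $\partial^u$ is the analogous union of pieces of unstable leaves. A smooth hypersurface piece $S\subset\partial^s$ that lies inside stable leaves must contain a full stable direction $E^s$ together with directions transverse to it. Iterating by $f$, the tangent planes of $S$ converge, by the usual graph-transform or $\lambda$-lemma argument, to a linear subspace $E^s\oplus W$, where $W\subset E^u$ is an $A$-invariant subspace of codimension one in $E^u$. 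Compactness and invariance of $\partial^s$ then show that $\partial^s$ contains pieces of genuinely linear hyperplanes $E^s\oplus W$. The same reasoning gives hyperplanes $E^u\oplus W'$ with $W'\subset E^s$.

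Next comes the arithmetic step. Closures of linear pieces of $\partial\mathcal{R}$ cannot be dense, since $\partial\mathcal{R}$ is nowhere dense, so the projections of these subspaces to $\T^d$ must be closed subtori. Hence $E^s\oplus W$ is a rational subspace. The same holds for the intersections of such hyperplanes under iteration, and for intersections with their Galois conjugates. Using these facts, I intend to show that the characteristic polynomial of $A^n$ factors over $\Q$ into quadratics, each with one eigenvalue of modulus greater than 1 and one of modulus less than 1 and product $\pm1$. The rational invariant planes $V_i=(E^s\oplus W)\cap(E^u\oplus W')$, taken over suitable choices of $W$ and $W'$, then give the $\Q$-similarity to block diagonal form. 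Linearity of every smooth Markov partition follows by rerunning the tangent-convergence argument on each boundary piece: once $\partial^s$ is known to consist of pieces of $A$-invariant rational hyperplanes, backward iteration together with the rectangle structure forces each smooth piece to coincide with such a plane.

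The main obstacle is the arithmetic step, namely turning the existence of invariant rational hyperplanes containing $E^s$ into the $2\times2$ block structure. This is exactly where Bowen's three-dimensional argument used irreducibility of the cubic. My first attempt will be to show that the Galois group of the characteristic polynomial permutes the eigenvalues compatibly with the stable/unstable splitting, so that every rational invariant subspace meets $E^s$ and $E^u$ in equal dimensions. An induction on the dimension, restricting $A$ to such rational invariant subspaces, should then reduce everything to the $2$-dimensional case.
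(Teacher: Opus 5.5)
The paper does not prove this theorem. It quotes it from Cawley~\cite{C} as background, so your proposal has to stand on its own. The sufficiency half is essentially the standard argument and is fine. One caution: choose the product lattice to be the sum of the projections of $\Z^d$ onto the rational blocks. That lattice contains $\Z^d$, so $\T^d$ covers the product of $2$-tori and the product of Adler--Weiss partitions pulls back. A common \emph{sub}lattice would put the covering on the wrong side.

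The necessity half contains a false step. Nowhere density does \emph{not} force $E^s\oplus W$ to be rational, and the claim fails exactly in the model case. Take $A=L_1\oplus L_2$ on $\R^2\oplus\R^2$ with $|\lambda^u_1|>|\lambda^u_2|$. The product of Adler--Weiss partitions has stable-boundary pieces $R_1\times\sigma$, with $\sigma$ a stable side of a rectangle in the second factor. These lie in translates of $\R^2\oplus E^s_2=E^s\oplus E^u_1$, and $E^u_1$ is precisely the limit your $\lambda$-lemma step produces. Yet this hyperplane is not rational, since its intersection with the second factor is the irrational line $E^s_2$.

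The reason is that a boundary piece is a \emph{bounded} piece of the hyperplane. Forward invariance of $\partial^s$ spreads it only along $W$, while the $E^s$-directions contract. What you can actually extract is that $\partial^s$ contains a translate of $p(V)$ thickened by local stable plaques, where $V$ is the rational hull of $W$. Nowhere density then gives only $V\cap E^u=W$, which is a condition on $W$ alone.

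Three further problems in this part:
\begin{itemize}
\item $(E^s\oplus W)\cap(E^u\oplus W')=W\oplus W'$ has dimension $d-2$, so your $V_i$ are planes only when $d=4$.
\item There are no ``suitable choices'' of $W,W'$ to range over, because the limiting argument singles out one of each.
\item The convergence of $A^nW$ needs a spectral gap at the bottom of the unstable spectrum. If the weakest expansion comes from a complex pair, the planes rotate and have no invariant limit, so that case needs a separate argument.
\end{itemize}

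The arithmetic step, which you flag as the main obstacle, is not carried out, and the route you sketch cannot carry it.
\begin{itemize}
\item No property of the Galois action makes rational invariant subspaces meet $E^s$ and $E^u$ in equal dimensions. The companion matrix of $x^3-x-1$ has $u=1$, $s=2$, and excluding it \emph{is} Bowen's theorem, so this must come from the partition.
\item Equal dimensions would not give $2\times 2$ blocks anyway. Take the companion matrix of $x^4-5x^2+x+1$, whose characteristic polynomial is irreducible with Galois group $S_4$ and has two roots on each side of the unit circle. No power of it has a proper rational invariant subspace, so your condition holds vacuously. Yet no power is $\Q$-similar to a block diagonal matrix.
\item The induction has nothing to act on, because a Markov partition of $\T^d$ need not restrict to one on an invariant subtorus.
\end{itemize}

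Even the corrected constraints admit non-examples. Let $A=L\oplus P$ with $L=\begin{pmatrix}2&1\\1&1\end{pmatrix}$ and $P$ the companion matrix of $x^3-10x^2-1$. $P$ both expands and contracts more strongly than $L$: its eigenvalue moduli are $\approx 10.01,\,0.316,\,0.316$, against $2.618,\,0.382$ for $L$. So the limiting subspaces are $W=E^u_P$ and $W'=E^s_P$. Both have rational hull equal to the $P$-factor $\R^3_P$, so $V\cap E^u=W$ and $V'\cap E^s=W'$ both hold, with $V,V'$ the rational hulls. But $d=5$ is odd, so the theorem forbids a smooth partition. Something beyond the generic limiting hyperplanes is therefore needed, for example how the boundary pieces close up the rectangles, and the proposal supplies nothing of that kind.

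Finally, your linearity argument presupposes the unproved (and, as stated, false) description of $\partial^s$ by invariant rational hyperplanes. It also invokes backward iteration, although $\partial^s$ is only forward invariant. Flattening of forward images says nothing about flatness of the original piece.
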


In general, Markov partitions are developed for invertible systems, but the theory applies in the context of non-invertible, expanding dynamical systems as well. The existence of Markov partitions in such systems is shown in e.g. Chapter 4 of \cite{URM} or Chapter 7 of \cite{Ru}, and a side-by-side development of the invertible and non-invertible settings can be found in \cite{AKS}. Work of Bedford \cite{Be} uses explicitly fractal techniques due to Dekking \cite{D} to construct Markov partitions for expanding toral endomorphisms. A folkloric result says that these Markov partitions ``almost always'' have non-smooth boundary, but we were not able to locate a precise statement in the literature.

This work attempts to address this apparent gap in the literature, and proves some smoothness results that seem to parallel Theorem \ref{thm:Cawley}, although we obtain them by elementary tools, avoiding the heavy topological machinery used in \cite{C}. More precisely, we prove the following; all the relevant definitions, in particular those describing different types of smoothness, can be found in Section 2.

\begin{theorem}\label{thm:power diag with integer eigenvals implies smooth MP}
    Suppose $A$ is a $d\times d$ expanding integer matrix such that for some $k\in\N$, $A^k$ is diagonalizable with integer eigenvalues. Then the induced expanding toral endomorphism $f$ admits a linear Markov partition.
\end{theorem}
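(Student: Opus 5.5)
The plan is to build the partition in two stages: first for $A^k$, then for $A$ itself. For the first stage, write $B=A^k$ and use the hypothesis that $B=PDP^{-1}$ with $D=\mathrm{diag}(\lambda_1,\dots,\lambda_d)$, where the $\lambda_i\in\Z$ satisfy $|\lambda_i|\ge 2$ (this follows from $A$ being expanding). Since the eigenvalues are rational, the eigenvectors $v_1,\dots,v_d$ can be chosen in $\Z^d$. They span a full-rank sublattice $\Lambda=\bigoplus_i \Z v_i$ of $\Z^d$, and $B\Lambda\subseteq\Lambda$ with $Bv_i=\lambda_i v_i$.

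The candidate partition for $B$ consists of the images in $\T^d$ of the small parallelepipeds
$$R_{\mathbf j}=\Big\{\textstyle\sum_i t_i v_i : t_i\in [j_i/N_i,(j_i+1)/N_i]\Big\}.$$
The main point is that $B$ maps the coordinate hyperplanes $\{t_i = c\}$ (in the basis $v_i$) to hyperplanes of the same family, since $t_i\mapsto \lambda_i t_i$ coordinatewise. Taking $N_i=|\lambda_i|$ might seem natural, but the parallelepipeds must also project well to the torus. Because $\Lambda\subseteq\Z^d$, the fundamental domain of $\Lambda$ covers $\T^d$ exactly $[\Z^d:\Lambda]$ times. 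So I would instead work with the full lattice $\Z^d$ and the grid of hyperplanes $\{t_i\in \frac1{M}\Z\}$ for a suitable $M$.

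The key observation is that the union $H$ of the images in $\T^d$ of all hyperplanes $\{x: \langle w_i,x\rangle\in\Z\}$ is forward invariant, $BH\subseteq H$. Here $w_i$ is the integer dual covector with $\langle w_i,v_j\rangle=0$ for $j\ne i$. Indeed, $w_iB=\lambda_i w_i$, so $\langle w_i,Bx\rangle=\lambda_i\langle w_i,x\rangle\in\Z$. These hyperplanes are rational, so their images are closed subtori of codimension one. The complement of $H$ is then a finite union of open convex polytopes, namely the images of the cells of the hyperplane arrangement. Forward invariance of the boundary, $B(\partial\mathcal R)\subseteq\partial\mathcal R$, is the standard criterion for a Markov partition in the expanding setting. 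Each cell is mapped by $B$ affinely onto a union of cells, because images of boundary hyperplanes land in $H$.

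There are two further requirements. The first is the small-diameter condition, which is needed so that $B$ is injective on each element. I would handle it by refining with the preimages $B^{-n}H$, or more simply by using the hyperplanes $\langle w_i,x\rangle\in\frac1M\Z$ with $M$ coprime to all $\lambda_i$, chosen large. This family is still invariant under $B$, since multiplying $\frac1M\Z$ by $\lambda_i$ stays in $\frac1M\Z$. The second is passing from $B$ to $A$. For this I would take $\mathcal H=\bigcup_{m=0}^{k-1}A^m H_M$, a finite union of rational hyperplane subtori. It satisfies $A\mathcal H\subseteq \mathcal H$, because $A^kH_M=BH_M\subseteq H_M$. The cells of $\mathcal H$ are convex polytopes and give a linear Markov partition for $f$, since the boundary is forward invariant and the pieces are fine enough.

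I expect the main obstacle to be verifying the Markov definition exactly as formulated in Section 2, rather than merely forward invariance of the boundary. In particular, I need that each element is mapped injectively onto a union of elements, and I need a generating or small-diameter property. The injectivity should follow from $M$ large, using that $A$ is uniformly expanding and hence locally injective at a fixed scale. Some care is also needed with cells of the hyperplane arrangement that wrap around the torus; this is where I would check that the arrangement contains hyperplanes in $d$ independent directions, so that every cell is a bounded simplex-like polytope lifting isometrically.
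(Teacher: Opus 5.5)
Your proposal is correct and is essentially the paper's proof written in terms of boundaries: the grid $\{\langle w_i,x\rangle\in\frac1M\Z\}$ is exactly the tiling obtained by pushing the cube grid of Lemma~\ref{lem:MP for diagonal} through the rational similarity $\frac1K T$ of Lemma~\ref{lem:MPforsimilarmatrices}, and $\bigcup_{m=0}^{k-1}f^m(H_M)$ is precisely the boundary of the intersection tiling $\overline{\operatorname{int}C_0\cap\cdots\cap\operatorname{int}A^{k-1}C_{k-1}}$ of Lemma~\ref{lem:MPforpowers}. The differences are cosmetic: taking the images $f^m(H_M)$ in $\T^d$ makes the lifted arrangement automatically $\Z^d$-periodic, which replaces the paper's rescaling by $|\det A|^k$, and the coprimality of $M$ with the $\lambda_i$ is not needed.
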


\begin{theorem}\label{thm:nonsmoothness in dimension 2}
    Let $A$ be a $2\times 2$ expanding integer matrix, $f$ the induced expanding toral endomorphism, and $\M$ a Markov partition for $f$. Then the following are true: \begin{enumerate}
        \item If $A$ has real, irrational eigenvalues of different modulus, then $\M$ is essentially nowhere smooth.
        \item If $A$ has complex eigenvalues with arguments that are not a rational multiple of $\pi$, then $\M$ is nowhere differentiable.
        \item If $A$ is not diagonalizable, then $\M$ is not smooth, but may be hybrid.
    \end{enumerate}
\end{theorem}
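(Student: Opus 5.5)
Theorem \ref{thm:nonsmoothness in dimension 2} rests on the Markov property: the boundary $\partial\M=\bigcup_i\partial R_i$ is forward invariant, $f(\partial\M)\supseteq\partial\M$. Locally, $\partial\M$ is therefore the image of a small piece of itself under a branch of $A^{n}$ (mod $\Z^2$). So I will lift a small boundary arc $\gamma\subset\R^2$ and study $A^n\gamma$. The Markov property lets me pull back: a piece of $\partial\M$ near any point $p$ is contained in $A^{-n}$ applied to a boundary piece near $f^n(p)$, composed with a translation. Since $\partial\M$ is compact and $A^{-n}$ contracts, any smoothness present at one scale propagates to all small scales and all locations, and conversely. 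The whole argument compares the tangent directions that $\partial\M$ would need with how $A^{-1}$ acts on directions in projective space $\mathbb{P}^1$.

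\textbf{Case (1).} Let $|\lambda_1|>|\lambda_2|>1$ be irrational real eigenvalues with eigenlines $E_1,E_2$. Under $A^{-1}$ every direction other than $E_2$ is attracted to $E_1$ in $\mathbb{P}^1$. Suppose some boundary arc is $C^1$ at a point with tangent direction $v$. Pulling back by $A^{-n}$ yields boundary arcs whose tangent directions converge to $E_1$, or equal $E_2$ if $v\in E_2$. By compactness and a limiting argument, $\partial\M$ then contains a nontrivial segment parallel to $E_1$ or to $E_2$. Its image in $\T^2$ is a piece of a line of irrational slope, and pushing it forward by $f$ lengthens it indefinitely within the compact set $\partial\M$. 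The resulting immersed line is dense in $\T^2$. But $\partial\M$ is closed and nowhere dense, since it is a finite union of boundaries of sets that are closures of their interiors. This is a contradiction, so no boundary arc is $C^1$ except on a negligible set. Making ``essentially'' precise (for instance, smooth at no point outside a measure-zero or meagre set of the boundary) will need the paper's definition; I expect to argue via Lebesgue density along the arc.

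\textbf{Case (2).} Now $A^{-1}$ acts on $\mathbb{P}^1$, in a suitable basis, as an irrational rotation composed with a conformal contraction. Suppose $\partial\M$ has a tangent direction $v$ at some point. Pulling back gives tangent directions $R^n v$ at points of $\partial\M$ at arbitrarily small scales. Now use compactness: take a limit point $p$ of these base points and rescale. The rescaled blow-ups of $\partial\M$ near $p$ must contain segments in a dense set of directions. A set with a tangent line at a point cannot contain such blow-ups, so I expect a contradiction. The delicate part is uniformity, namely making sure the smoothness persists with controlled constants under pull-back. I will handle this by using differentiability at a single point together with self-similarity at the Markov scales.

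\textbf{Case (3).} $A$ is a Jordan block with integer eigenvalue $\lambda$, $|\lambda|\ge 2$; it is integral because the characteristic polynomial has a repeated rational root. Then $A^{-n}$ contracts every direction toward the eigenline $E$, which is rational. As in case (1), smoothness would force $\partial\M$ to contain segments parallel to $E$, and here those close up into circles in $\T^2$. Curves in other directions get sheared. I will show that a fully piecewise smooth partition would yield a linear one with edges in directions invariant under $A$. The only such direction is $E$, and a partition cannot be bounded only by parallel circles, which gives a contradiction. For ``may be hybrid'' I will exhibit an explicit example. Take $A=\begin{pmatrix}2&1\\0&2\end{pmatrix}$ and the partition of $\T^2$ into strips bounded by horizontal circles, subdivided by a Dekking/Bedford-type fractal curve in the transverse direction. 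I will verify the Markov property directly.

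The main obstacle I expect is case (2), turning the rotation heuristic into a rigorous statement of nowhere differentiability.
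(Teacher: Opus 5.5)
There is a genuine gap, and it starts at the foundation: your argument runs in the wrong direction. You assert $f(\partial\M)\supseteq\partial\M$ and then treat $A^{-n}$-pullbacks of boundary arcs as boundary arcs. What actually holds is Lemma~\ref{lem:boundaries are invariant}, $f(\partial\M)\subseteq\partial\M$. Your inclusion fails in general. For $A=2I$ and the grid of squares of side $1/10$, the circle $x=1/10$ lies in $\partial\M$ but is not contained in $f(\partial\M)$, which consists only of the circles at multiples of $1/5$. So the arcs you produce in cases (1) and (2) lie in $f^{-n}(\partial\M)$, not in $\partial\M$.

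Several further problems in case (1) remain even if you grant those arcs:
\begin{itemize}
\item Pulled-back arcs shrink to points, so no compactness or limiting argument can extract a nontrivial segment from them.
\item Your projective dynamics are backwards. Under $A^{-1}$, directions other than $E_1$ are attracted to $E_2$; it is $A$ that pushes them toward $E_1$.
\item A tangent equal to $E_2$ at a single point produces no segment.
\end{itemize}

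The fix is to iterate forward, as the paper does. Pass to a subinterval on which one of two things holds.
\begin{itemize}
\item Either $\gamma'\parallel\vec v_2$ throughout. Then the arc is an irrational segment, and its forward images are dense.
\item Or $\gamma_1'\ge m>0$ and $|\gamma_2'|\le M$. Then $A^{2n}\gamma([0,R\lambda_1^{-2n}])$ has $\vec v_1$-extent at least $Rm=N$ and $\vec v_2$-deviation at most $(\lambda_2/\lambda_1)^{2n}RM<\varepsilon/2$. Lemma~\ref{lem-irrationalslopemeansuniformlydense} makes $\partial\M$ $\varepsilon$-dense, contradicting Lemma~\ref{lem:boundary not dense}.
\end{itemize}
Also, in the paper ``essentially nowhere smooth'' just means ``contains no $C^1$ arc'', so no Lebesgue-density refinement is needed.

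Case (2) has no working argument. A blow-up at a limit point $p$ contradicts nothing, since $\partial\M$ is only assumed differentiable at one point of one curve. Forward iteration needs neither uniformity nor self-similarity:
\begin{itemize}
\item Choose $n_k$ so that $R_\phi^{n_k}\gamma'(0)$ converges in direction to $\vec w=T^{-1}\vec v$, where $\vec v$ has irrational slope.
\item Differentiability at $0$ alone puts $\gamma((0,\eta])$ in a cone of half-angle $<\delta$ about $\gamma'(0)$. The conformal rotation carries this into a thin cone about $\vec w$.
\item Scaling by $\lambda^{n_k}$ stretches the piece past length $N$ while keeping it within $N\tan\delta<\varepsilon$ of $[0,N]\vec w$.
\end{itemize}

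In case (3), your claim that a piecewise smooth partition must be linear with $A$-invariant edges is an unproved Cawley-type rigidity statement, and the paper never needs it. The point you are missing is that $E$ is rational, so long horizontal arcs are dense only if their heights are. The paper's route:
\begin{itemize}
\item After (MP) rules out a purely horizontal boundary, take a $C^1$ arc with $p_2\gamma'\neq 0$ starting at a height $\alpha\in D$, so that $\{k^{2n}\alpha \bmod 1\}$ is dense.
\item With $\eta_n=\varepsilon/(k^{2n}M_2)$, the shear term $2nk^{2n-1}$ in $J^{2n}$ sends $\gamma([0,\eta_n])$ to an arc of horizontal extent $>1$ for large $n$.
\item That arc has vertical extent $\le\varepsilon$ and sits at height $k^{2n}\alpha$, which gives density of $\partial\M$.
\end{itemize}
Your hybrid example matches the paper's appeal to Bedford's construction.
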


Theorem \ref{thm:Jordans have hybrid MP} will show that in fact every non-diagonalizable expanding $2\times 2$ matrix admits a hybrid Markov partition. Combining the two theorems above and the Rational Root Theorem gives us the following.
\begin{corollary}
    Let $A$ be an expanding $2\times 2$ integer matrix. Then the induced expanding toral endomorphism $f$ admits a smooth, in fact linear, Markov partition if and only if for some $k\in \N$, $A^k$ is diagonalizable with integer eigenvalues.
\end{corollary}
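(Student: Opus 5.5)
The proof combines Theorem \ref{thm:power diag with integer eigenvals implies smooth MP} and Theorem \ref{thm:nonsmoothness in dimension 2}. The Rational Root Theorem is used to sort the eigenvalues of $A$ into cases.

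The ``if'' direction is immediate. If $A^k$ is diagonalizable with integer eigenvalues for some $k$, then Theorem \ref{thm:power diag with integer eigenvals implies smooth MP} with $d=2$ gives a linear Markov partition for $f$, and a linear partition is in particular smooth.

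For the ``only if'' direction, assume no power of $A$ is diagonalizable with integer eigenvalues. We show that no Markov partition of $f$ is smooth. Let $\chi(t)=t^2-(\operatorname{tr}A)t+\det A$ be the characteristic polynomial of $A$. It is monic with integer coefficients, so by the Rational Root Theorem every rational eigenvalue of $A$, and likewise of any power $A^k$, is an integer. We split into cases by the eigenvalues $\lambda_1,\lambda_2$ of $A$.
\begin{enumerate}
\item \emph{$A$ is not diagonalizable.} Part 3 of Theorem \ref{thm:nonsmoothness in dimension 2} applies directly: no Markov partition is smooth.
\item \emph{Real eigenvalues with $\lambda_1\neq\lambda_2$.} Then $A$ is diagonalizable. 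If both eigenvalues were rational they would be integers, and the hypothesis would fail with $k=1$. So they are irrational; being roots of a rational quadratic, they are Galois conjugates $\lambda,\lambda'$. If $|\lambda_1|\neq|\lambda_2|$, part 1 gives that every Markov partition is essentially nowhere smooth, hence not smooth. The remaining subcase is $\lambda_2=-\lambda_1$, which forces $\operatorname{tr}A=0$. Then $A^2=-\det A\cdot I$ is diagonal with integer eigenvalues, contradicting the hypothesis with $k=2$.
\item \emph{Real eigenvalues with $\lambda_1=\lambda_2$.} A repeated root of a rational quadratic is rational, hence an integer. If $A$ is diagonalizable it is the scalar matrix $\lambda I$, contradicting the hypothesis. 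Otherwise we are in case 1.
\item \emph{Complex eigenvalues $\lambda,\bar\lambda$ with $\arg\lambda\notin\pi\Q$.} Part 2 gives that every Markov partition is nowhere differentiable.
\item \emph{Complex eigenvalues with $\arg\lambda=\pi p/q$.} Then $\lambda^{2q}=\bar\lambda^{2q}=|\lambda|^{2q}$ is real. The power $A^{2q}$ has a double real eigenvalue, which is rational since it is a root of the integer characteristic polynomial of $A^{2q}$, hence an integer. Also $A$ is diagonalizable over $\C$ because its eigenvalues are distinct, so $A^{2q}$ is diagonalizable. Together these give $A^{2q}=\lambda^{2q}I$, contradicting the hypothesis.
\end{enumerate}
Every case ends either in a contradiction or in a part of Theorem \ref{thm:nonsmoothness in dimension 2} that rules out smoothness, which proves the corollary.

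The only delicate points are the excluded subcases of Theorem \ref{thm:nonsmoothness in dimension 2}: equal-modulus real eigenvalues (case 2) and rational-angle complex eigenvalues (case 5). In both, some power of $A$ is scalar. The step that needs care is the integrality: an eigenvalue of an integer matrix that is rational must be an integer. This follows from the Rational Root Theorem applied to the characteristic polynomial of that power.
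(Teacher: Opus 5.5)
Your proof is correct and follows the paper's argument: Theorem~\ref{thm:power diag with integer eigenvals implies smooth MP} gives one direction, and the three cases of Theorem~\ref{thm:nonsmoothness in dimension 2} together with the Rational Root Theorem give the other, with the welcome addition that you explicitly handle the configurations Theorem~\ref{thm:nonsmoothness in dimension 2} leaves out by showing some power of $A$ is scalar. One small wording fix in case 5: the double eigenvalue of $A^{2q}$ is rational because it is a repeated root (indeed it equals $|\lambda|^{2q}=(\det A)^q$), not merely because it is a root of an integer polynomial.
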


There is an obvious conjectured generalization of this result, mirroring Theorem \ref{thm:Cawley}, which we leave for future work.

\begin{conjecture}
    Let $A$ be an expanding $n\times n$ integer matrix. Then the induced toral endomorphism $f$ admits a smooth, in fact linear, Markov partition if and only if for some $k\in \N$, $A^k$ is diagonalizable with integer eigenvalues.
\end{conjecture}

The structure of this paper is as follows. Section \ref{Section:Prelims} contains preliminaries and a brief introduction to symbolic dynamics. Section \ref{Section:Constructions} is dedicated to proving Theorem \ref{thm:power diag with integer eigenvals implies smooth MP}, and gives some techniques of transforming a Markov partition for one system into a Markov partition for another. We also show that the power in the theorem statement is bounded, and expand on a more precise classification of all possible powers. In Section \ref{Section:NonSmooth}, we prove Theorem \ref{thm:nonsmoothness in dimension 2} via a case-by-case analysis. Section \ref{Section:FractalDim} is dedicated to estimating the Hausdorff dimension of the boundary of Markov partitions for arbitrary expanding systems. 

\section*{Acknowledgements}

The authors thank Mariusz Urbánski for providing background, advice, and encouragement. They also give thanks to Brian Marcus, who helped us greatly simplify our proofs, and Chengyu Wu and Alexia Yavicoli for stimulating discussion and detailed feedback. 

\section{Preliminaries} \label{Section:Prelims}

A \textit{dynamical system} is a compact metric space $X$ with a continuous self-map $f:X\to X$. The map $f:X\to X$ is \textit{positively expansive} if there exists some $c>0$, called an \textit{expansive constant},  such that for all $x,y\in X$, $x\neq y$ implies there exists an $n\in \N$ such that $d(f^nx,f^ny)>c$. We are interested in \textit{expanding toral endomorphisms} on the $d$-dimensional torus $\T^d:= \R^d/\Z^d$, which are maps induced from a $d\times d$ integer matrix $A$ for which all eigenvalues have modulus strictly bigger than 1. The induced expanding toral endomorphism is $$f_A:\T^d\to\T^d$$ defined as $$f_A(x+\Z^d)=Ax+\Z^d.$$ It is not hard to show that the condition on the modulus of the eigenvalues implies that $f_A$ is positively expansive. We will call $A$ an \textit{expanding} integer matrix and write $f$ for $f_A$ when no ambiguity exists. Write $p:\R^d\to\T^d$ for the standard projection map, and consider the standard metric on $\T^d$, $$d(x,y):=\min_{\substack{u,v\in \R^d \\ p(u)=x \\p(v)=y}} d_E(u,v)$$ where $d_E(u,v)$ is the Euclidian metric on $\R^d$.

Since $f:\T^d\to \T^d$ is a positively expansive map from a manifold onto itself, a theorem of Coven and Reddy \cite{CR} gives a metric $\tilde{d}$ compatible with the standard metric for which there exist constants $\varepsilon>0,\lambda >1$, so that \begin{equation*}
    \tilde d(x,y)<\varepsilon \implies \tilde d(fx,fy)>\lambda \tilde d(x,y)
\end{equation*} 
and \begin{equation*}
    \tilde d(fx,z)<\varepsilon\lambda \implies B(x,\varepsilon)\cap f^{-1}(z) \text{ is a singleton.}
\end{equation*} That is, $f$ is \textit{expanding} in their terminology with respect to $\tilde{d}$, which need not be equal to $d$. 

A set $R\subset X$ is \textit{proper} if $R=\overline{\operatorname{int}(R)}\neq \emptyset$. Following \cite{AKS}, a \textit{Markov partition} for an expansive dynamical system $(X,f)$ is a finite cover $\M=\{R_i\}_{i=1}^n$ of $X$ by proper sets with disjoint interiors such that two things hold: \begin{enumerate}
    \item $\M$ is \textit{metrically generating}, meaning for all $(x_i)\in \{1,...,n\}^\N$, $$\bigcap_{k=0}^\infty f^{-k}(\operatorname{int}(R_{x_n}))$$ consists of at most one point. 
    \item $\M$ satisfies a \textit{Markov property} (MP), meaning that for all $R,R'\in \M$, if $$f(\operatorname{int}(R))\cap \operatorname{int} (R')\neq \emptyset,$$ then each $y\in \operatorname{int} (R')$ has a unique preimage in $R$. 
\end{enumerate}

For constructive arguments, it is convenient to use a slightly stronger version of the Markov property above: 

\begin{enumerate}[(MP+)]
    \item For all $R\in \M$, $f|_R$ is injective, and there exist $R_1,...,R_n\in \M$ such that $f(R)=\bigcup_{i=0}^nR_i$.
\end{enumerate} The property (MP+) directly implies (MP), but it also captures more explicitly the intuitive idea behind Markov partitions for non-invertible systems. 

Existence of Markov partitions is commonly shown for systems $(X,f)$ which are expanding with respect to a given metric, which is why we point out the existence of $\tilde d$ and the expanding property. However, the Markov partition itself is a purely topological concept and makes no reference to the metric. Except for Section 5, we will always take any distance under consideration with respect to the standard metric $d$ on $\T^d$ or the Euclidian metric on $\R^d$. When no ambiguity exists, either metric will be denoted $d$. For details on Markov partitions for expanding systems in generality, we refer to Chapter 4 of \cite{URM} or Chapter 7 of \cite{Ru}.

We denote the \textit{boundary} of a Markov partition $$\partial\M:=\bigcup_{R\in \M} \partial R.$$ It follows immediately from the definition that the boundary of a Markov partition is an invariant set.

\begin{lemma}\label{lem:boundaries are invariant}
    Let $f:X\to X$ be a positively expansive system on a compact metric space, and $\M$ a Markov partition for $f$. Then $$f(\partial\M)\subset \partial\M.$$
\end{lemma}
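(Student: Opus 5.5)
Write $U:=\bigcup_{R\in\M}\operatorname{int}(R)$. The plan is to prove the equivalent statement that $f^{-1}(U)\subset U$, after first checking that $\partial\M = X\setminus U$. Given that, if $x\in\partial\M$ then $x\notin U$, so $fx\notin U$ by the preimage inclusion, i.e.\ $fx\in\partial\M$.

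\textbf{Step 1: $\partial\M = X\setminus U$.} Since each $R$ is closed, $\partial R = R\setminus\operatorname{int}(R)$. If $x\in\partial R$ and also $x\in\operatorname{int}(R')$ for some $R'\neq R$, then $\operatorname{int}(R')$ is a neighbourhood of $x$. It therefore meets $\operatorname{int}(R)$, because $R=\overline{\operatorname{int}(R)}$ is proper. This contradicts the disjointness of interiors. Conversely, if $x\notin U$, then $x$ lies in some $R$ because $\M$ covers $X$, and $x\notin\operatorname{int}(R)$, so $x\in\partial R$.

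\textbf{Step 2: $f^{-1}(U)\subset U$.} Take $y\in U$ and $x$ with $fx=y$; we must show $x\in U$. Suppose instead that $x\in\partial\M$. Choose a small neighbourhood $V$ of $x$ such that $f|_V$ is a homeomorphism onto an open set $f(V)\subset\operatorname{int}(R')$, where $R'$ is the tile whose interior contains $y$. This uses the local homeomorphism property of expanding maps, e.g.\ the Coven--Reddy structure recalled above: $f$ is an open map that is injective on small balls. Because $x\in\partial R$ for some $R$ and $R$ is proper, $V$ meets $\operatorname{int}(R)$. Then $f(\operatorname{int}(R))\cap\operatorname{int}(R')\ne\emptyset$, so by (MP) every point of $\operatorname{int}(R')$ has a unique preimage in $R$. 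In particular $y$ has a unique preimage $x'\in R$. We also have $x\in R$ and $fx=y$, so $x'=x$.

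\textbf{Step 3: the contradiction (main obstacle).} This is where $x$ being on the boundary must be excluded, and it is the delicate step. The idea is to use the finitely many tiles $R_1,\dots,R_m$ containing $x$. Since $x\notin U$, we have $m\ge 2$, because otherwise $x$ would lie in $\operatorname{int}(R_1)$, the other tiles being closed. Apply Step 2 to each $R_j$: each point of $\operatorname{int}(R')$ has exactly one preimage in each $R_j$. Now take $z\in V\cap\operatorname{int}(R_1)$ and set $w=fz\in\operatorname{int}(R')$. Near $x$, $w$ has the single local preimage $z$, since $f|_V$ is injective. The unique preimage of $w$ in $R_2$ is continuous in $w$ (it is the inverse branch), and as $w\to y$ it must converge to the unique preimage of $y$ in $R_2$, which is $x$. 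For $w$ close to $y$ this preimage therefore lies in $V$, and so it equals $z$. Hence $z\in R_2$ for all $z$ in the open set $f|_V^{-1}(\text{small nbhd of }y)\cap\operatorname{int}(R_1)$. This gives an open subset of $\operatorname{int}(R_1)$ contained in $R_2$, hence meeting $\operatorname{int}(R_2)$ because $R_2$ is proper, which contradicts the disjointness of interiors. The step I expect to require the most care is justifying the continuity and convergence of the inverse branch into $R_2$. I would handle it by compactness of $R_2$ together with the uniqueness of preimages: any limit point of preimages in $R_2$ of $w_k\to y$ is a preimage of $y$ lying in $R_2$, so it equals $x$.
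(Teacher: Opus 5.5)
Your argument is correct in substance, and it is essentially the paper's proof read in contrapositive form. Both proofs rest on two facts. First, $f$ is injective near $x$, by positive expansiveness. Second, a compactness argument shows that the unique preimage, in a fixed tile, of points $w_k\to f(x)$ must converge to $x$. The paper takes points $\tilde x_n\to x$ outside a single tile $R\ni x$ and produces a second preimage $\bar z\neq x$ of $f(x)$ in $R$, which contradicts (MP) directly. In your Step 3, the points of $\operatorname{int}(R_1)$, which lie outside $R_2$, play the role of the $\tilde x_n$ for the tile $R_2$. So the auxiliary tile $R_1$ and the count $m\ge 2$ are harmless but dispensable. Your Step 1 is also used tacitly by the paper, at the very end.

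The one thing to repair is how you obtain $V$ in Step 2. The lemma is stated for an arbitrary compact metric space, not a manifold. There the Coven--Reddy structure quoted in Section 2 is not available, and a positively expansive map need not be open. For example, take the shift on the one-sided even shift: $\sigma([1])$ contains $0^\infty$ but no neighbourhood of it. So in general you cannot ask for $f|_V$ to be a ``homeomorphism onto an open set''.

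You never actually use that property. You only need two things:
\begin{itemize}
\item $f(V)\subset\operatorname{int}(R')$, which is just continuity of $f$.
\item $f|_V$ injective. This follows from positive expansiveness by taking $V\subset B(x,c/2)$. If $fa=fb$ with $a\neq b$, then $f^na=f^nb$ for all $n\ge 1$, so the separating time must be $n=0$, giving $d(a,b)>c$.
\end{itemize}
Similarly, credit the continuity at $y$ of the inverse branch into $R_2$ to your compactness argument: any limit point of $R_2$-preimages of $w_k\to y$ is an $R_2$-preimage of $y$, hence equals $x$. Do not attribute it to a local-homeomorphism property. With these substitutions the proof goes through as written.
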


\begin{proof}
    Let $x\in \partial R$ and suppose $f(x)\in \operatorname{int}(R')$ for some $R,R'\in \M$. Moreover, let $c>0$ be an expansive constant for $f$. For all $n\in \N$ sufficiently large, take $\delta_n\in(0,c/2)$ so that $$f(B(x,\delta_n))\subset B\left(f(x),\frac{1}{n}\right) \subset \operatorname{int}(R').$$
    Since $x\in \partial R$, there exists some $\tilde{x}_n\in B(x,\delta_n)\setminus R$. By the Markov property, $f(\tilde{x}_n)\in \operatorname{int}(R')$ has a unique preimage $z_n$ in $R$, but $z_n$ cannot be in $B(x,c/2)$, for otherwise $z_n=\tilde{x}_n$ by positive expansiveness. Take now a convergent subsequence $z_{n_k}$ with $\bar{z}=\limk z_{n_k}\in R$, since $R$ is closed. Then $f(\bar{z})=\limk f(z_{n_k}) = f(x)$, and $\bar{z}\neq x$ since $d(x,\bar{z})\geq c/2$. But this contradicts the Markov property, so we are done. 
\end{proof}

We will classify qualitatively different types of smoothness for boundaries of Markov partitions.

\begin{definition}\label{def:smoothness MP}
    Let $f:\T^d\to \T^d$ be an expanding toral endomorphism, and $\M$ a Markov partition for $f$. We say $\M$ is \begin{itemize}
    \item \textit{linear} if $\partial\M$ is piecewise linear,
    \item \textit{smooth} if $\partial\M$ is piecewise $C^1$, 
    \item \textit{hybrid} if $\partial\M$ is not piecewise $C^1$, but contains a $C^1$ arc,
    \item \textit{essentially nowhere smooth} if $\partial\M$ contains no $C^1$ arcs, 
    \item \textit{nowhere differentiable} if for every continuous injection $\gamma:[0,1]\to\partial\M$ and every $t\in[0,1]$, either $\gamma$ is not differentiable at $t$ or $\gamma'(t)=0$. 
\end{itemize}
\end{definition}

We will repeatedly use the following property of the boundary of Markov partitions, which is immediate from Lemma \ref{lem:boundaries are invariant}.
\begin{lemma}\label{lem:boundary not dense}
    If $\M$ is a Markov partition for $f$, then $\bigcup_{k=0}^\infty f^k(\partial\M) = \partial \M$ is nowhere dense.
\end{lemma}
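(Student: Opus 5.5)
The statement has two parts: the equality $\bigcup_{k\ge0} f^k(\partial\M)=\partial\M$, and the fact that $\partial\M$ is nowhere dense. I will prove them in that order.

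\textbf{The equality.} This follows directly from Lemma \ref{lem:boundaries are invariant}. The $k=0$ term of the union is $\partial\M$ itself, so $\partial\M\subset\bigcup_{k\ge0}f^k(\partial\M)$. For the reverse inclusion, the lemma gives $f(\partial\M)\subset\partial\M$. Induction on $k$ then yields $f^k(\partial\M)=f(f^{k-1}(\partial\M))\subset f(\partial\M)\subset\partial\M$ for every $k\ge1$. Hence the whole union lies in $\partial\M$, and the two sets are equal.

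\textbf{Nowhere density.} $\partial\M$ is a finite union of the sets $\partial R$ with $R\in\M$. Each $\partial R$ is closed, so a finite union of them is closed. It therefore suffices to show that each $\partial R$ has empty interior, since a finite union of closed nowhere dense sets is nowhere dense. (This is elementary; Baire is not needed for finite unions.)

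Fix $R\in\M$ and suppose an open set $U$ satisfies $U\subset\partial R$. Because $R$ is proper, $R=\overline{\operatorname{int}(R)}$. So every point of $U$ is a limit of points of $\operatorname{int}(R)$, and since $U$ is open, $U\cap\operatorname{int}(R)\neq\emptyset$. On the other hand, $\partial R=R\setminus\operatorname{int}(R)$ because $R$ is closed, so $\partial R$ is disjoint from $\operatorname{int}(R)$. This forces $U\cap\operatorname{int}(R)\subset\partial R\cap\operatorname{int}(R)=\emptyset$, a contradiction unless $U=\emptyset$. Hence $\partial R$ has empty interior, and $\partial\M$ is nowhere dense.

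I do not expect a genuine obstacle; this is bookkeeping. The one point to state explicitly is that properness ($R=\overline{\operatorname{int}R}$) is exactly the hypothesis that rules out a boundary with interior. For a general closed set, $\partial R$ could have nonempty interior, so the argument genuinely uses the definition of a Markov partition and not just closedness of the pieces.
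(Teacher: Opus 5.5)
Your proof is correct, but it reaches the contradiction by a different route than the paper. The paper argues that an open set $U\subset\partial R$ would lie in $\overline{R}\cap\overline{R^c}$. That would force two distinct rectangles of $\M$ to have intersecting interiors, so the paper relies on the partition axioms (a cover by finitely many closed sets with disjoint interiors), and it leaves the step of locating the second rectangle implicit.

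You instead use only the single-set fact $\partial R\cap\operatorname{int}(R)=\emptyset$. This is more direct and needs nothing about the other rectangles. It also shows the paper's detour is unnecessary: the step the paper needs, that $U$ meets $\operatorname{int}(R)$, already contradicts $U\subset\partial R$. You also spell out the induction behind $\bigcup_k f^k(\partial\M)=\partial\M$, which the paper simply attributes to Lemma \ref{lem:boundaries are invariant}.

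One correction to your closing remark: properness is not needed here. For any closed $R$, an open $U\subset\partial R\subset R$ is automatically contained in $\operatorname{int}(R)$, hence in $\partial R\cap\operatorname{int}(R)=\emptyset$. So the boundary of every closed set has empty interior; examples such as $\partial\Q=\R$ require non-closed sets. Your use of $R=\overline{\operatorname{int}(R)}$ is valid but superfluous, and your assertion that a general closed set can have a boundary with nonempty interior is false.
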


\begin{proof}
    Suppose that for some $R\in\M$, $\partial R$ is dense in some open set $U\subset \T^d$. Then one would have that $\overline{R}\cap\overline{R^c}$ contains an open set, but this would mean there are two rectangles whose interiors intersect, a contradiction. 
\end{proof}

The reason Markov partitions are of great interest in dynamical systems is that they allow for the construction of a \textit{symbolic coding} of a system. We briefly introduce this coding here, focusing on the aspects that are relevant for our results; for more details we refer to \cite{AKS} or Chapter 6 of \cite{LM}. 

A given Markov partition $\M$ for a dynamical system $(X,f)$ defines an $n\times n$ matrix $T$ where $$T(i,j)=\begin{cases}
    1,& \text{if } f(\operatorname{int}(R_i))\cap \operatorname{int}(R_j) \neq \emptyset \\
    0,& \text{otherwise}
\end{cases}.$$ Such a matrix $T$ in turn defines a \textit{shift of finite type} (SFT) 

$$\Sigma_T:=\{(x_0x_1x_2...)\in \{1,...,n\}^\N:\forall i\in\N, T(x_i,x_{i+1})=1\},$$ which we equip with the shift map $\sigma$ defined by $$\sigma(x)_i=x_{i+1}.$$ We give $\{1,...,n\}$ the discrete topology and $\{1,...,n\}^\N$ the corresponding product topology. Then $\Sigma_T$ is compact in the subspace topology, $\sigma$-invariant, and metrizable. 

Observation 1.4 of \cite{AKS} shows that a Markov partition yields a continuous and surjective map $\pi:\Sigma_T\to X$ such that the diagram
$$\begin{tikzcd}
\Sigma_T \arrow[r, "\sigma"] \arrow[d, "\pi"] & \Sigma_T \arrow[d, "\pi"] \\
X \arrow[r, "f"]                              & X                        
\end{tikzcd}$$ commutes. The map $\pi$ described there, although not presented in this form, is defined as $$\pi(x):=\bigcap_{N=0}^\infty \overline{\bigcap_{i=0}^Nf^{-i}\operatorname{int}(R_{x_i})},$$ where we identify the intersection with the (necessarily unique) point it contains. The map $\pi$ is called a \textit{coding} map.

Using the coding, one can \textit{refine} Markov partitions to make them arbitrarily small (in any compatible metric). Continuity of $\pi$ and compactness of $\Sigma_T$ shows that taking $N$ sufficiently large, the diameters of the sets $$\overline{\bigcap_{i=0}^Nf^{-i}\operatorname{int}(R_{x_i})}$$ can uniformly be made arbitrarily small. Taking these sets to be small enough, it can be shown that they satisfy (MP+), and hence form a new Markov partition. Crucially for our purposes, this type of \textit{recoding} does not change the geometry of the boundaries in an impactful way.

A particular construction of Markov partitions for expanding endomorphisms of $\T^2$ using fractals can be found in \cite{Be}, which we will refer to later.

\section{Constructing smooth Markov partitions} \label{Section:Constructions}

We first construct some Markov partitions, and discuss how a Markov partition for one expanding matrix can be transformed into a Markov partition for another. In order to obtain a Markov partition for an expanding toral endomorphism $f_A$, it suffices to construct a certain $\Z^d$-invariant \textit{tiling} $\mathcal{C}$ of $\R^d$. That is, a cover of $\R^d$ by proper compact sets with disjoint interiors, with the following properties: \begin{enumerate}
    \item \textit{(Translation invariance)} For any $v\in \Z^d$, $\mathcal{C}+v=\mathcal{C}$.
    \item \textit{(Finiteness)} $\#\{C\in \mathcal{C}:C\cap[0,1]^d\neq \emptyset\}<\infty.$
    \item \textit{(Smallness)} For all $C\in\mathcal{C}$, $\diam(C)<1$ and $\operatorname{diam}(AC)<1.$
    \item \textit{(Markov property)} For all $C\in\mathcal{C}$, there exist $C_1,...,C_n\in \mathcal{C}$ satisfying $$AC=\bigcup_{1\leq i\leq n} C_i.$$ 
\end{enumerate}

The first two conditions imply that the latter two need only be checked for all of the finitely many $C\in \mathcal{C}$ that intersect the unit cube. We will call a tiling with the properties above a \textit{Markov tiling} for $A$, and refer to the elements of $\mathcal{C}$ as \textit{tiles}. Up to a refinement, Markov partitions and Markov tilings stand in one-to-one correspondence with each other. 

\begin{proposition}\label{prop:MPs give Markov tilings}
    Let $A$ be a $d\times d$ expanding integer matrix. Then the projection of any Markov tiling for $A$ is a Markov partition for $f_A$. 

    Conversely, any Markov partition for $f_A$ can, after a refinement, be lifted to a Markov tiling of $\R^d$.
\end{proposition}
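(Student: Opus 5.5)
For the first direction, let $\mathcal{C}$ be a Markov tiling for $A$ and set $\M:=\{p(C): C\in\mathcal{C}\}$, where we identify tiles differing by an element of $\Z^d$. The plan is to check the defining properties of a Markov partition, using (MP+) for the Markov property and metric generation via expansivity. The steps are as follows.

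\textbf{Finiteness and cover.} Translation invariance and finiteness give only finitely many distinct sets $p(C)$. They cover $\T^d$ because $\mathcal{C}$ covers $[0,1]^d$.

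\textbf{Injectivity and properness.} Since $\diam(C)<1$, two distinct points of $C$ never differ by a nonzero integer vector. Hence $p|_C$ is injective, and being a closed map on a compact set, it is a homeomorphism onto its image. So $p(C)$ is proper, with $\operatorname{int}p(C)=p(\operatorname{int}C)$; injectivity also forces the interior to be open in $\T^d$ locally.

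\textbf{Disjoint interiors.} Suppose $p(\operatorname{int}C)\cap p(\operatorname{int}C')\neq\emptyset$. Then $\operatorname{int}C\cap\operatorname{int}(C'+v)\neq\emptyset$ for some $v\in\Z^d$, so $C=C'+v$ because tiles have disjoint interiors. Thus $p(C)=p(C')$.

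\textbf{(MP+).} Since $f\circ p=p\circ A$, we get $f(p(C))=p(AC)=\bigcup p(C_i)$. Injectivity of $f$ on $p(C)$ follows as before from $\diam(AC)<1$.

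\textbf{Metric generation.} Suppose $x\neq y$ both lie in $\bigcap_k f^{-k}\operatorname{int}R_{x_k}$. Lift $x$ to $\tilde x\in C_0$. By the Markov property of the tiling, the successive lifts $A^k\tilde x$, $A^k\tilde y$ stay in single tiles, each of diameter $<1$. This gives $|A^k(\tilde x-\tilde y)|<1$ for all $k$, which is impossible for an expanding matrix unless $\tilde x=\tilde y$. The one point to handle carefully is that the lift of $y$ must be chosen in the same tile $C_0$ and followed along the same tile sequence. This works because $A\operatorname{int}C$ meets $\operatorname{int}C_i$ in exactly one tile containing the lift.

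For the converse, let $\M$ be a Markov partition. First refine it, using the coding map $\pi$ as described in Section 2, so that every element $R$ and its image $f(R)$ have diameter less than some $\delta$. Choose $\delta<1/4$ and smaller than the injectivity radius of $p$, together with an expansive constant. Each such small $R$ then lifts, via a local inverse of $p$, to a compact set $\tilde R\subset\R^d$ with $p|_{\tilde R}$ a homeomorphism. Define
$$\mathcal{C}:=\{\tilde R+v: R\in\M,\ v\in\Z^d\}.$$
Translation invariance, finiteness and smallness are immediate, as is the fact that $\mathcal{C}$ is a cover with disjoint interiors, since $p$ is a local homeomorphism.

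\textbf{Markov property of $\mathcal{C}$ (main obstacle).} This is the step I expect to be hardest. Write $f(R)=\bigcup R_i$ by (MP+), which holds for the refined partition. Then $A\tilde R$ is a set of diameter $<1/4$ projecting onto $\bigcup R_i$. Each $R_i$ has a unique lift meeting the small neighbourhood of $A\tilde R$, namely $\tilde R_i+v_i$. Connectedness is not available here, so I would instead argue by diameter: $A\tilde R$ lies in a ball of radius $1/4$, and within such a ball $p$ is injective. Therefore
$$A\tilde R=\bigcup(\tilde R_i+v_i)\cap B,$$
and each $\tilde R_i+v_i$ meeting $A\tilde R$ lies wholly in it, because its projection lies in $f(R)$ and its diameter is small. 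This would give $A\tilde R=\bigcup_i(\tilde R_i+v_i)$.
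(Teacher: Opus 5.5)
Your proof is correct and follows the paper's overall plan. In the first direction you check the defining properties of $p\mathcal{C}$ one by one, and your treatment of properness and of $\operatorname{int}p(C)=p(\operatorname{int}C)$ is more explicit than the paper's. In the converse you refine, lift each small $R$ by a local inverse of $p$, translate by $\Z^d$, and get the Markov property from injectivity of $p$ on small sets. You use a ball $B\supset A\tilde R$ where the paper anchors each $R_j$ at the lifted point $\bar x_j+v_j$; these amount to the same thing.

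The one genuinely different step is metric generation. The paper pulls back: it asserts integer vectors $v_i,w_i$ with $\bar x+v_i,\ \bar y+w_i\in A^{-i}C_{x_i}$ and uses $\diam(A^{-i}C_{x_i})\to 0$. But $f^i(x)\in R_{x_i}$ only gives $A^i\bar x\in C_{x_i}+u$ with $u\in\Z^d$, i.e.\ a shift lying in $A^{-i}\Z^d$. Points that are close modulo such shifts need not project to the same point, so that step really needs a consistent choice of lifts. Your forward argument supplies exactly this. Starting both lifts in the same tile and using that $p$ is injective on $AC_k$ keeps $A^k\tilde x$ and $A^k\tilde y$ in a common tile. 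Then $\abs{A^k(\tilde x-\tilde y)}<1$ for all $k$ forces $\tilde x=\tilde y$.

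When writing it up, make two points explicit.
\begin{enumerate}
\item Say why $A^{k+1}\tilde x$ lands in the right tile. Fix a tile $C'$ with $p(C')=R_{x_{k+1}}$. The point $A^{k+1}\tilde x$ lies in $\operatorname{int}(AC_k)$ and projects into $\operatorname{int}R_{x_{k+1}}=p(\operatorname{int}C')$. So some translate $C'+u$ has interior meeting $\operatorname{int}(AC_k)$. That nonempty open intersection is covered by the finitely many constituent tiles of $AC_k$, so $C'+u$ is one of them. By injectivity of $p$ on $AC_k$, it is the only one projecting onto $R_{x_{k+1}}$, and the same reasoning applies to $A^{k+1}\tilde y$.
\item A bound on the torus diameter of $f(R)$ does not by itself give the Euclidean bound $\diam(A\tilde R)<1/4$ that you use. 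Take $\delta<1/(4\norm{A})$, so that $\diam(A\tilde R)\le\norm{A}\diam(\tilde R)<1/4$.
\end{enumerate}
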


\begin{proof}
    First, suppose $\mathcal{C}$ is a Markov tiling for $A$. Then, by translation invariance and the smallness condition, $\operatorname{int}(pC)\cap \operatorname{int}(pC') \neq \emptyset$ if and only if $C=C'+v$ for some $v\in \Z^d$. Finiteness ensures that $p\mathcal{C}$ consists of finitely many distinct elements, and the smallness property implies in particular that $f|_{p(C)}$ is injective for any $C\in \mathcal{C}$. Finally, the Markov property for tilings means that $p\mathcal{C}$ satisfies (MP+), hence is a Markov partition. It remains to show that $p\mathcal{C}$ is metrically generating. Enumerate the elements of $p\mathcal{C}$ so that $$p\mathcal{C}=\{R_i\}_{i=1}^n,$$ and let $(x_i)_{i\in \N}\in \{1,...,n\}^{\N}$. Suppose $$x,y\in \bigcap_{i=0}^\infty f^{-i}(\operatorname{int}R_{x_i}),$$ and let $\bar{x},\bar{y}\in \R^d$ be two arbitrary lifts of $x,y$, and $C_{x_i}$ an arbitrary tile in the preimage of $R_{x_i}$. Then for all $i$, there exists some $v_i,w_i\in \Z^d$ such that $$\bar{x}+v_i,\bar{y}+w_i\in A^{-i}C_{x_i}.$$ Since $A$ is expanding, all eigenvalues of $A^{-1}$ have modulus less than 1. Since the tiles of $\mathcal{C}$ are uniformly bounded in diameter, we get $$\operatorname{diam}(A^{-i}C_{x_i})\xrightarrow[i\to \infty]{} 0.$$ This means $\lim_{i\to\infty} d(\bar{x}+v_i,\bar{y}+w_i)= 0,$ meaning that $$d(p\bar{x},p\bar{y})=d(x,y)=0,$$ so $x=y$. We conclude that $p\mathcal{C}$ is a Markov partition satisfying (MP+).

    Now suppose $\mathcal{M}$ is a Markov partition for $f_A$. Note that for $x,y\in\T^d$ sufficiently close, $$ d(f_Ax,f_Ay) \leq \|A\| d(x,y),$$ where $\|A\|$ denotes the operator norm of $A$ induced by the Euclidian norm on $\R^d$. Thus, we can refine $\M$ such that for all $R\in\mathcal{M}$, $$\diam(f_AR)<\frac{1}{2}\quad \text{and} \quad \diam(R)<\frac{1}{2}.$$

    For each $R_i$, pick an arbitrary point $x_i\in R_i$ and $\bar{x}_i\in \R^d$ some lift. Then for each $v\in \Z^d$ and each $y\in R_i$, there exists a unique $\bar{y}\in p^{-1}(y)\cap B(\bar{x}_i+v,1/2).$ We define our tiling to be $$\mathcal{C}:=\{p^{-1}R_i\cap B(\bar{x}_i+v,1/2):R_i\in\M,v\in\Z^d\}.$$
    That is, $\mathcal{C}$ is the collection of unique copies of each $R_i$ within 1/2-balls centered at $\Z^d$ translations of $\bar{x}_i$.
    Translation invariance of $\mathcal{C}$ is immediate, and the finiteness property follows from the fact that $\mathcal{M}$ is finite. Since $\diam(fR_i)<1/2$, we also have $\diam(AC)<1/2<1$ for all $C\in \mathcal{C}$. It remains to show that $AC=\bigcup_jC_j$ for some finite set $\{C_j\}\subset\mathcal{C}$. 

    First note that $$pAC=fpC=\bigcup_{j=1}^kR_j$$ for some $R_j\in \M$. Since $\diam(AC)<1/2$, for each $R_j$ there is a unique $v_j\in\Z^d$ such that $\bar{x}_j+v_j\in AC$. Thus, $AC$ consists of exactly those unique copies of $R_j$ containing $\bar{x}_j+v_j$, hence $AC$ is a finite union of tiles $C_j\in \mathcal{C}$. 
\end{proof}

To prove Theorem \ref{thm:power diag with integer eigenvals implies smooth MP}, we will use a short sequence of lemmas.

\begin{lemma}\label{lem:MP for diagonal}
    Let $D$ be a $d\times d$ expanding diagonal integer matrix. Then $D$ admits a linear Markov partition.
\end{lemma}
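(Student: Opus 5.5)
We construct an explicit Markov tiling for $D$ out of small axis-parallel boxes and then apply Proposition \ref{prop:MPs give Markov tilings}. Write $D=\operatorname{diag}(\lambda_1,\dots,\lambda_d)$ with $\lambda_j\in\Z$ and $|\lambda_j|\geq 2$, which holds because $D$ is expanding. Fix an integer $N\geq 1$ and let $\mathcal{C}_N$ be the grid of cubes
$$C_{m}=\prod_{j=1}^d\left[\frac{m_j}{N},\frac{m_j+1}{N}\right],\qquad m=(m_1,\dots,m_d)\in\Z^d .$$
These are proper compact sets with disjoint interiors that cover $\R^d$. Their boundaries lie in the hyperplanes $x_j\in\frac1N\Z$, so the projected partition is linear.

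We check the four conditions of a Markov tiling in turn. Translation invariance holds because $\Z^d+\frac1N\Z^d=\frac1N\Z^d$. Finiteness holds because only $(N+2)^d$ cubes meet $[0,1]^d$.

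For the Markov property, note that $D$ acts coordinatewise. It therefore sends $C_m$ to the box $\prod_j \lambda_j[\frac{m_j}{N},\frac{m_j+1}{N}]$. The $j$-th factor is the interval between $\frac{\lambda_j m_j}{N}$ and $\frac{\lambda_j m_j+\lambda_j}{N}$, whose endpoints lie in $\frac1N\Z$ since $\lambda_j$ is an integer. Hence this factor is the union of $|\lambda_j|$ consecutive grid intervals of length $\frac1N$. Taking the product, $DC_m$ is exactly the union of $\prod_j|\lambda_j|$ cubes of $\mathcal{C}_N$. This is the only place the integrality of the eigenvalues is used, and it is what makes the grid structure $D$-invariant.

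For smallness, we need $\diam(C_m)=\sqrt d/N<1$ and $\diam(DC_m)=\frac1N\big(\sum_j\lambda_j^2\big)^{1/2}=\frac{\|(\lambda_1,\dots,\lambda_d)\|}{N}<1$. Both hold once $N>\|(\lambda_1,\dots,\lambda_d)\|$, which also exceeds $\sqrt d$ since each $|\lambda_j|\geq 1$. With this choice of $N$, Proposition \ref{prop:MPs give Markov tilings} shows that $p\mathcal{C}_N$ is a Markov partition for $f_D$. Its boundary is the image of finitely many coordinate hyperplane pieces, so it is linear.

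The argument is essentially routine, and no step is a genuine obstacle. The only points needing care are orientation and the size of $N$. A negative $\lambda_j$ reverses the interval but still maps grid points to grid points, so the Markov property is unaffected. The smallness condition is what forces a fine enough grid: a naive choice such as $N=1$ fails, since then the unit cube is not small and $f_D$ is not injective on it.
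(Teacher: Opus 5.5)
Your proposal is correct and is essentially the paper's proof. The paper takes the grid of cubes of side $1/K$ with $K$ large enough that $\diam(D[0,1/K]^d)<1$ and simply asserts it is a Markov tiling; you carry out that verification in full, and you also drop the paper's extra requirement $K\geq|\det D|$, which plays no role in the argument.
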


\begin{proof}
    Let $K\geq |\det D|$ such that $$\operatorname{diam}(D[0,1/K]^d)<1.$$ Then $$\mathcal{C}=\{[i_1/K,(i_1+1)/K]\times ... \times [i_d/K,(i_d+1)/K]:i_1,..,i_d\in\Z\}$$
    is easily shown to be a Markov tiling for $D$. 
\end{proof}

\begin{lemma}\label{lem:MPforsimilarmatrices}
    Let $A,B$ be $d\times d$ expanding integer matrices that are similar over $\Q$. Then for any Markov tiling $\mathcal{C}$ for $A$ there exists a linear transformation turning $\mathcal{C}$ into a Markov tiling for $B$.
\end{lemma}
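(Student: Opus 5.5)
Since $A$ and $B$ are similar over $\Q$, there is an invertible rational matrix $P$ with $B = PAP^{-1}$. Clearing denominators, we may take $P$ to be an integer matrix; it is then still invertible over $\Q$, although not necessarily over $\Z$. The natural candidate is the image tiling $P\mathcal{C} := \{PC : C\in\mathcal{C}\}$, possibly rescaled by a factor $1/K$. We check each axiom of a Markov tiling in turn, replacing $P$ by a suitable multiple where needed.

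\textbf{Markov property.} This is formal. For any $C \in \mathcal{C}$, write $AC = \bigcup_i C_i$. Then
\[ B(PC) = PAP^{-1}PC = PAC = \bigcup_i PC_i .\]
Replacing $P$ by $\frac1K P$ commutes with $A$ in the same way, since $B = (\tfrac1K P)A(\tfrac1K P)^{-1}$. So the Markov property holds for $Q\mathcal{C}$ with any $Q = cP$, $c \ne 0$. Properness and disjointness of interiors are preserved because $Q$ is a homeomorphism. Finiteness follows from local finiteness: tiles are bounded and cover $\R^d$ with a uniform bound on the count meeting the unit cube. The images $QC$ have uniformly bounded diameter, and each point is met by only finitely many of them, so local finiteness is preserved.

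\textbf{Translation invariance.} This is the main obstacle. The set $Q\mathcal{C}$ is invariant under $Q\Z^d$, but we need invariance under $\Z^d$. If we take $Q = P^{-1}$ with $P$ chosen integral, i.e. use the conjugacy in the other direction, then $Q\Z^d = P^{-1}\Z^d \supseteq \Z^d$. Since $\Z^d \subseteq P^{-1}\Z^d$, invariance under the finer lattice $P^{-1}\Z^d$ implies invariance under $\Z^d$. So we fix the choice as follows: find an integer matrix $S$ with $A = SBS^{-1}$, and set $Q = S^{-1}$. Then $B = QAQ^{-1}$, and $Q\mathcal{C}$ is $Q\Z^d \supseteq \Z^d$-invariant. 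The point to watch is that the conjugation must be set up in the direction that enlarges the lattice rather than shrinking it.

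\textbf{Smallness.} The tiles $QC$ may have diameter $\ge 1$, and so may $BQC$. Here we use the refinement procedure from Section 2: replace $\mathcal{C}$ by a refined Markov tiling for $A$ whose tiles are so small that $\|Q\|\operatorname{diam}(C) < 1/\|B\|$. Concretely, the refined tiles are the sets $C_0\cap A^{-1}C_1\cap\dots\cap A^{-N}C_N$. These again satisfy translation invariance, finiteness and the Markov property for $A$, and their diameters tend to $0$ uniformly as $N \to \infty$, because $A^{-1}$ contracts and the original tiles are uniformly bounded. Alternatively, we could pass through Proposition \ref{prop:MPs give Markov tilings}, projecting, refining the Markov partition, and lifting again. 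Once the tiles are this small, $\operatorname{diam}(QC) < 1$ and $\operatorname{diam}(BQC) < 1$.

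Hence, after refining, the linear map $Q = S^{-1}$ turns $\mathcal{C}$ into a Markov tiling for $B$. Since $Q$ is linear, it also preserves linearity of the boundaries, which is what the statement is used for afterwards.
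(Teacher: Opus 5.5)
Your Markov-property check and your translation-invariance argument are the paper's argument in different notation. The paper writes $B=TAT^{-1}$ with $T$ integral and uses $Q=\frac1K T$ with $|\det T|$ dividing $K$. Then $Q^{-1}=KT^{-1}$ is integral by Cramer's rule, so $Q\Z^d\supseteq\Z^d$, which is exactly your choice $Q=S^{-1}$ with $S$ integral.

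The only real difference is smallness. The paper takes $K$ larger (still a multiple of $|\det T|$), which shrinks all tiles while keeping $Q^{-1}$ integral; in your notation, replace $S$ by $mS$ for a large integer $m$. You instead refine $\mathcal{C}$. That is a detour, and read literally it applies $S^{-1}$ to a refinement of $\mathcal{C}$ rather than to $\mathcal{C}$ itself, so it does not quite prove the statement as written. You would also need to discard the intersections $C_0\cap A^{-1}C_1\cap\dots\cap A^{-N}C_N$ with empty interior, since those sets are not proper.

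Your route can be rescued. By the Markov property, a nondegenerate intersection forces $C_{i+1}\subseteq AC_i$ for each $i$, so it equals $A^{-N}C_N$. Every tile occurs as some $C_N$, because each tile lies in $AC'$ for some $C'$. Thus your refinement is just $A^{-N}\mathcal{C}$. The single linear map $S^{-1}A^{-N}$ conjugates $A$ to $B$ and has integral inverse $A^NS$, so it turns $\mathcal{C}$ itself into a Markov tiling for $B$.

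So the proposal is correct once this is said explicitly. The paper's rescaling reaches the same conclusion more economically.
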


\begin{proof}
    Since $A,B$ are similar over $\Q$, there exists an integer matrix $T$ invertible over $\Q$ such that $B=TAT^{-1}$, where $T^{-1}$ need not be an integer matrix. Let $K$ be a large enough multiple of $|\det T|$ such that $\frac{1}{K}T\mathcal{C}$ satisfies the smallness property, i.e. $$\operatorname{diam}\left(\frac{1}{K}TC\right)<1,\text{ and }\operatorname{diam}\left(B\left(\frac{1}{K} TC\right)\right)<1$$ for all of the finitely many $C\in \mathcal{C}$ for which $\frac{1}{K}TC$ intersects the unit cube. 
    
    For any $C\in \mathcal{C}$, we have $$B\left(\frac{1}{K} T\mathcal{C}\right)=\frac{1}{K} TAT^{-1}TC=\frac{1}{K}T\bigcup_{i}C_i=\bigcup_i\frac{1}{K}TC_i,$$ so $\frac{1}{K}T\mathcal{C}$ satisfies the Markov property for tilings.
    Moreover, for any $v\in \Z^d$, $$\frac{1}{K}T\mathcal{C}+v=\frac{1}{K}T(\mathcal{C}+KT^{-1}v).$$
    Since $|\det T|$ divides $K$, $KT^{-1}v\in \Z^d$ by Cramer's rule. $\mathcal{C}$ is translation invariant, so $$\frac{1}{K}T\mathcal{C}+v=\frac{1}{K}T\mathcal{C}.$$

    We conclude that $\frac{1}{K}T\mathcal{C}$ is a Markov tiling for $B$, as desired. 
\end{proof}

\begin{lemma}\label{lem:MPforpowers}
    Let $\mathcal{C}$ be a Markov tiling for $A^k$. Then up to a linear rescaling, the collection $\mathcal{E}$ of nonempty sets $$\overline{\operatorname{int}C_0\cap \operatorname{int}AC_1\cap...\cap\operatorname{int}A^{k-1}C_{k-1}}$$ for $C_i\in\mathcal{C}$ is a Markov tiling for $A$. 
\end{lemma}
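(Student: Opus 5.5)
We verify the four defining properties of a Markov tiling for $A$ directly for $\mathcal{E}$; the Markov property is the only nontrivial one. First, note that $\mathcal{E}$ covers $\R^d$ by proper compact sets with disjoint interiors. For each $i$, the family $A^i\mathcal{C}$ is again a tiling of $\R^d$ by proper compact sets with disjoint interiors, since $A^i$ is invertible over $\R$. A common refinement of finitely many such tilings, taken as closures of intersections of interiors, again covers $\R^d$. To see this, observe that the union of the boundaries of the relevant tiles is closed and nowhere dense, because each $A^i\mathcal{C}$ is locally finite by the finiteness property. Hence every point off this union lies in exactly one interior $\operatorname{int}C_0\cap\dots\cap\operatorname{int}A^{k-1}C_{k-1}$, and the closures cover everything. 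The elements are proper by construction, and distinct elements have disjoint interiors.

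Translation invariance uses only that $A$ is an integer matrix. For $v\in\Z^d$ we have $A^iC+v=A^i(C+A^{-i}v)$, and $A^{-i}v$ need not be an integer vector. So we instead use that $\mathcal{C}+v=\mathcal{C}$, which gives $A^i\mathcal{C}+A^iv=A^i\mathcal{C}$. Since $A^i\Z^d\subset\Z^d$, invariance of all the $A^i\mathcal{C}$ simultaneously only holds under the lattice $A^{k-1}\Z^d$. Therefore, before refining, we would replace $\mathcal{C}$ by a rescaled tiling $\frac{1}{K}\mathcal{C}$ as in Lemma \ref{lem:MPforsimilarmatrices}, with $K$ a multiple of $|\det A|^{k}$. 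The point is that $\mathcal{C}$ is invariant under $K\cdot\Z^d$-shifts scaled appropriately, that is, $\frac1K\mathcal{C}$ is invariant under $\frac{1}{K}\Z^d\supset A^{-i}\Z^d$. Then $\frac{1}{K}\mathcal{C}$ is still a Markov tiling for $A^k$, because scalars commute with $A$. Moreover, $A^i(\frac1K\mathcal{C})$ is $\Z^d$-invariant for each $i<k$, and so is $\mathcal{E}$. This is the ``linear rescaling'' in the statement. Choosing $K$ large also gives smallness: each element of $\mathcal{E}$ lies in some $\frac1K C_0$, and $A$ applied to it lies in $\frac1K AC_1$, both of small diameter. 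Finiteness follows from local finiteness of each family.

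The main step is the Markov property. Let $E=\overline{\operatorname{int}C_0\cap\operatorname{int}AC_1\cap\dots\cap\operatorname{int}A^{k-1}C_{k-1}}$. Then
$$AE=\overline{\operatorname{int}AC_0\cap\dots\cap\operatorname{int}A^{k-1}C_{k-2}\cap\operatorname{int}A^kC_{k-1}}.$$
By the Markov property of $\mathcal{C}$ for $A^k$, we have $A^kC_{k-1}=\bigcup_j C'_j$ with $C'_j\in\mathcal{C}$. Up to the nowhere dense set of tile boundaries, $\operatorname{int}A^kC_{k-1}$ therefore equals $\bigcup_j\operatorname{int}C'_j$. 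Similarly, for each $1\le i\le k-1$, $\operatorname{int}A^{i}C_{i-1}$ is, up to a nowhere dense set, the disjoint union of its intersections with the interiors of the tiles of $A^{i}\mathcal{C}$. It is in fact exactly one tile of $A^i\mathcal{C}$, namely $A^iC_{i-1}$ itself.

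Hence
$$AE=\bigcup_j\overline{\operatorname{int}C'_j\cap\operatorname{int}AC_0\cap\dots\cap\operatorname{int}A^{k-1}C_{k-2}},$$
where we discard the empty terms. Each nonempty term is an element of $\mathcal{E}$. The hardest technical point is justifying that closure commutes with these finite unions when we ignore nowhere dense boundary sets. We would handle it by noting that all sets involved are regular closed, and that for regular closed sets $\overline{\operatorname{int}(X)\cap\operatorname{int}(Y)}$ distributes over finite unions of tiles whose interiors are disjoint and whose union is $Y$.
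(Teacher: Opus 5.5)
Your proposal is correct and follows essentially the same route as the paper's proof. Both rescale by $\frac1K$ with $|\det A|^k \mid K$ so that each $A^i\mathcal{C}$ ($i<k$) is $\Z^d$-invariant, then pass $A$ through the closure and use $A^kC_{k-1}=\bigcup_j C_j'$ to split $AE$ into elements of $\mathcal{E}$; your nowhere-density argument simply justifies the closure/union interchange that the paper writes down without comment. One small slip in the smallness check: since $E\subset \frac1K C_0$, you get $AE\subset \frac1K AC_0$ (not $\frac1K AC_1$), which gives the same conclusion.
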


\begin{proof}
    We first replace $\mathcal{C}$ with $\frac{1}{K|\det A|^k}\mathcal{C}$ where $K$ is large enough so that $\mathcal{E}$ satisfies the smallness condition. This way, we guarantee that $A^j\mathcal{C}$ is translation invariant for $j\in\{0,1,...,k-1\}$. It is immediate that $\mathcal{E}$ satisfies the finiteness property and covers $\R^d$ by proper sets with disjoint interiors. 

    Since translation by $v\in\Z^d$ is a homemorphism on $\R^d$, \begin{align*}
        &\overline{\operatorname{int}C_0\cap \operatorname{int}AC_1\cap...\cap\operatorname{int}A^{k-1}C_{k-1}} +v \\
        &= \overline{\operatorname{int}(C_0+v)\cap \operatorname{int}(AC_1+v)\cap...\cap\operatorname{int}(A^{k-1}C_{k-1}+v)} \\
        &= \overline{\operatorname{int}C_0'\cap \operatorname{int}AC_1'\cap...\cap\operatorname{int}A^{k-1}C_{k-1}'}
    \end{align*}
    where $C_i' \in \mathcal{C}$ for $i\in\{0,1,...,k-1\}$, so $\mathcal{E}$ is translation invariant.

    For the Markov property, we note that since $A$ is a homeomorphism on $\R^d$, we get

    \begin{align*}
        A\left(\overline{\operatorname{int}C_0\cap \operatorname{int}AC_1\cap...\cap\operatorname{int}A^{k-1}C_{k-1}}\right)&=\overline{\operatorname{int} AC_0\cap \operatorname{int} A^2C_1\cap... \cap \operatorname{int}A^kC_{k-1}} \\
        &= \overline{\operatorname{int} AC_0\cap \operatorname{int} A^2C_1\cap... \cap \operatorname{int}\left(\bigcup_{1\leq j\leq n} C_j\right)} \\
        &= \bigcup_{1\leq j\leq n} \overline{\operatorname{int}C_j\cap\operatorname{int}AC_0\cap...\cap \operatorname{int}A^{k-1}C_{k-2}}
    \end{align*}
    
\end{proof}

Observe that the operations described in Lemmas \ref{lem:MPforsimilarmatrices} and \ref{lem:MPforpowers} preserve linearity, smoothness (or lack thereof), and the Hausdorff dimension of Markov partitions. Our first result is now immediate.

\begin{proof}[Proof of Theorem \ref{thm:power diag with integer eigenvals implies smooth MP}]
    Let $A$ be an expanding integer matrix such that for some $k\in \N$, $A^k$ is diagonalizable with integer eigenvalues. To show that $A$ has a smooth Markov partition, combine Lemmas \ref{lem:MP for diagonal} and \ref{lem:MPforsimilarmatrices} to get a linear Markov partition for $A^k$, then apply Lemma \ref{lem:MPforpowers} to get a Markov partition for $A$.
\end{proof}

\subsubsection{On the hypothesis of Theorem \ref{thm:power diag with integer eigenvals implies smooth MP}}

We point out that while Theorem \ref{thm:power diag with integer eigenvals implies smooth MP} asks that \textit{some} power of an expanding integer matrix $A$ has integer eigenvalues, the minimal such power is actually bounded by a constant depending only on the dimension. The idea of the proof is to first consider the smallest power $\nu$ for which the eigenvalues of $A^\nu$ all have integer modulus. The characteristic polynomial of $A^\nu$ then (essentially) decomposes into cyclotomic polynomials, which allow us to bound how big $n$ needs to be for $A^n$ to have integer eigenvalues.

Let us introduce some notation. Let $\mathcal{A}_d$ be the collection of $A \in M_d(\Z)$ for which there exists an $n \in \N$ such that $A^n$ has integer eigenvalues. The minimum such $n$ will be written $n_A$. Also, denote $$\mathcal{H}(d):=\{n_A: A\in \mathcal{A}_d\}$$
and $$\mathcal{L}(d):= \sup \mathcal{H}(d).$$

\begin{proposition}\label{prop:bounded_power}
    For all $d\in \N$, $\mathcal{L}(d)$ is finite.
\end{proposition}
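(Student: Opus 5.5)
Fix $A\in\mathcal{A}_d$ with eigenvalues $\lambda_1,\dots,\lambda_d$ (with multiplicity). The plan is to bound $n_A$ by a quantity depending only on $d$, through the orders of the roots of unity $\lambda_i/\lambda_j$.

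\emph{Step 1: the eigenvalues are integers times roots of unity.} Since $A\in\mathcal{A}_d$, there is $n$ with $\lambda_i^n=m_i\in\Z$ for every $i$. Each $\lambda_i$ is therefore a root of $x^n-m_i$. Its Galois conjugates $\sigma(\lambda_i)$ are roots of $\chi_A\in\Z[x]$, and they satisfy $\sigma(\lambda_i)^n=m_i$. Hence $\sigma(\lambda_i)=\zeta\lambda_i$ for some $n$-th root of unity $\zeta$.

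\emph{Step 2: reduce to integer modulus.} Let $\nu$ be the least positive integer such that every $|\lambda_i|^\nu$ is an integer, so that $\lambda_i^\nu=|\lambda_i|^\nu\,\omega_i$ with $|\omega_i|=1$. I will not try to bound $\nu$ in isolation. Instead I bound the whole exponent directly.

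\emph{Step 3: factor the characteristic polynomial.} Each irreducible factor $g$ of $\chi_A$ over $\Q$ has degree $\deg g\le d$. Its roots are $\lambda\zeta$, where $\lambda$ is a fixed root and $\zeta$ ranges over roots of unity in a finite set $Z_g$ (by Step 1). Then $\lambda^{N}\in\Z$ holds exactly when $\lambda^N$ is fixed by every Galois automorphism. Since $\sigma(\lambda)^N=\zeta^N\lambda^N$, this happens exactly when $\zeta^N=1$ for all $\zeta\in Z_g$.

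So $n_A=\lcm_g\,\lcm_{\zeta\in Z_g}\operatorname{ord}(\zeta)$, and the task is to bound these orders. The key point is that $\zeta=\sigma(\lambda)/\lambda$ lies in the splitting field $F$ of $g$, which has degree at most $(\deg g)!\le d!$ over $\Q$. Also $\Q(\zeta)\subset F$ has degree $\varphi(\operatorname{ord}\zeta)$. Hence $\varphi(\operatorname{ord}\zeta)\le d!$. Since $\varphi(m)\to\infty$, there are only finitely many admissible orders $m$, all at most some $M(d)$.

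Therefore $n_A$ divides $\lcm\{m:\varphi(m)\le d!\}$, a constant depending only on $d$. This gives $\mathcal{L}(d)<\infty$.

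\emph{Main obstacle.} The delicate step is justifying that the ratio $\sigma(\lambda)/\lambda$ is a root of unity lying in a field of bounded degree. One must check that the conjugate $\sigma(\lambda)$ is taken within the same irreducible factor, so that both numbers lie in the splitting field of $g$, whose degree is at most $d!$.

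The bound $d!$ is crude. A sharper version, matching the paper's hint via cyclotomic polynomials, would show that $\lambda^\nu/|\lambda|^\nu$ is a root of unity of degree at most $d$. It would then use the minimal polynomial structure of $x^{k}-m$ to get $\varphi(\operatorname{ord})\le d$. For finiteness alone, however, the crude bound suffices, and I would present that first.
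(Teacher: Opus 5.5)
Your proof is correct, but it takes a different route from the paper's.

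\emph{The paper's route.} It reduces to irreducible $\chi_A$ and uses $\chi_A \mid x^{n_A}-k$ to show that all roots share a common modulus $r$. The constant term gives $r^d\in\N$, so the least $\nu$ with $r^\nu\in\N$ satisfies $\nu\le d$. It then observes that $r^{-\nu d}\chi_{A^\nu}(r^\nu x)$ is a monic rational polynomial whose roots are roots of unity. Such a polynomial is a product of cyclotomic polynomials $\Phi_{m_J}$ with $\varphi(m_J)\le d$, which yields $n_A\le d\,(\max_{1\le t\le d}\max\varphi^{-1}(t))^d$.

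\emph{Your route.} You never separate modulus from argument; your Step 2 is in fact unused. The ratios $\sigma(\lambda)/\lambda$ encode both at once. This gives the exact formula $n_A=\lcm_g\lcm_{\zeta\in Z_g}\operatorname{ord}\zeta$ and the divisibility $n_A\mid \lcm\{m:\varphi(m)\le d!\}$.

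\emph{What each buys.} Your divisibility statement has exactly the form needed for the paper's later remark that it suffices to check $A^K$ for a fixed $K$ depending only on $d$; for $d=2$ it already gives $n_A\mid 12$. What you give up is sharpness in higher dimensions. The bound $\varphi(\operatorname{ord}\zeta)\le d!$ is much weaker than the paper's $\varphi(m_J)\le d$, and it is the latter that links the problem to finite-order elements of $\operatorname{GL}_d(\Q)$.

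\emph{Details to state explicitly.} First, passing from ``$\lambda^N$ is fixed by every automorphism'' to $\lambda^N\in\Z$ uses that $\lambda$ is an algebraic integer, being a root of the monic $\chi_A\in\Z[x]$. Second, the factor $g=x$ (eigenvalue $0$) must be set aside, since $\sigma(\lambda)/\lambda$ is then undefined.

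The ``main obstacle'' you flag is not one. Every Galois conjugate of $\lambda$ is a root of its minimal polynomial $g$, so the ratio automatically lies in the splitting field of $g$.
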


\begin{proof}
    Let $A\in \mathcal{A}_d$ and $\lambda_j = r_je^{i\theta_j}$ with $r_j \ge 0$ denote its eigenvalues. Observe that if the characteristic polynomial $\chi_A(x)$ is a product of two polynomials $f(x), g(x) \in \Q[x]$, then $f$ and $g$ partition the roots of $\chi_A$. Hence, $n_A$ will be the least common multiple of the smallest $n_f$ and smallest $n_g$ for which $\xi^{n_f} \in \Z$ and $\omega^{n_g} \in \Z$ for all roots $\xi$ of $f$ and $\omega$ of $g$. Since $\chi_A$ consists of at most $d$ irreducible factors, it thus suffices to bound $n_A$ by a function of $d$ when $\chi_A$ is irreducible over $\Q$.
    
    Assuming $\chi_A$ is irreducible, write $k = \lambda^{n_A} \in \Z$ for some root $\lambda$ of $\chi_A$. Then, $\lambda$ is a root of $x^{n_A} - k$, so $\chi_A$ divides $x^{n_A} - k$. This means all the roots of $\chi_A$ have the same modulus, so $r_1=r_2=...=r_d=r$ for some $r\geq0$. When $r=0$ the situation is of course trivial, so we consider $r>0$.

    Furthermore, the constant coefficient of $\chi_A$ is $\prod_{j=1}^d \lambda_j\in \Z$, so $r^d \in \N$. Thus, the smallest $\nu \in \N$ for which $r^\nu \in \N$ is at most $d$. After raising $A$ to the power $\nu$, and taking its characteristic polynomial, we obtain 
    $$
        \chi_{A^\nu}( r^\nu x) = \prod_{j=1}^d \brac{r^\nu x - r^\nu e^{i \nu\theta_j}}= r^{\nu d} \underbrace{\prod_{j=1}^d \brac{x - e^{i \nu\theta_j}}}_{\in \Q[x]}.
    $$
    Let $\varphi$ denote Euler's totient function and $\Phi_m$ the $m^{\text{th}}$ cyclotomic polynomial, so that $\deg(\Phi_m) = \varphi(m)$. By assumption, $e^{in_A\theta_j} = \pm 1$, so $\theta_j \in \pi \Q$. That is, all the roots of $\chi_{A^\nu}( r^\nu x)$ are roots of unity. Hence, $\chi_{A^\nu}( r^\nu x)/r^{\nu d}$ is a product of cyclotomic polynomials:
    $$
        \chi_{A^\nu}( r^\nu x) = r^{\nu d} \prod_{j=1}^d \brac{x - e^{i \nu\theta_j}} = r^{\nu d} \prod_{J \in \mJ} \Phi_{m_J}(x)
    $$
    for some indexing set $\set{m_J: J \in \mJ} \subset \N$. The smallest natural $n_J$ such that $\xi^{n_J} =\pm 1$ for all roots $\xi$ of $\Phi_{m_J}$ is
    $$
        n_J = \min \Set{t \in \N: \frac{2\pi t}{m_J} = 0 \mod \pi} = \frac{m_J}{\gcd(2,m_J)} \le m_J.
    $$
    Additionally, $\deg(\Phi_{m_J}) = \varphi(m_J) \le d$ since $\deg(\chi_{A^\nu}) = d$. It is well known that $\lim_{t \to \infty} \varphi(t) = \infty$, so there are only finitely many cyclotomic polynomials with degree at most $d$:
    $$
         m_J \le \max_{1 \le t \le d} \max \varphi^{-1}(t) < \infty
    $$
    Thus, we observe that
    $$
        n_A = \nu \, \lcm\set{n_J : J \in \mJ} \le d \prod_{J} n_J \le d \prod_{J} m_J \le d \brac{\max_{1 \le t \le d} \max \varphi^{-1}(t)}^d < \infty,
    $$
    as desired.
\end{proof}
After raising our matrix to the power $\nu$, the question of interest reduces to ``\textit{What possible products of cyclotomic polynomials can have total degree $d$?'}' This question is naturally related to finite order elements of the group $\operatorname{GL}_d(\Q)$. Indeed, a manual computation shows that in small dimensions 

$$
    \mathcal{H}(d)=\big\{n\in \N:\exists B\in \operatorname{GL}_d(\Q) \text{ such that } n=\inf \{k:B^k=I_d\}\big\},
$$
where $I_d$ denotes the $d\times d$ identity matrix. This latter question is well-studied, and one can use work in this direction to get a sharper bound on $\mathcal{L}(d)$, see e.g. \cite{KP, K}.

When $d=2$, the approach from the proof of Proposition \ref{prop:bounded_power} shows that $\mathcal{H}(2)= \set{1,2,3,4,6}$. One consequence of this is that in order to check if a $2\times 2$ expanding matrix $A$ has a linear Markov partition, it suffices to check if $A^{12}$ is diagonalizable with integer eigenvalues, since $12 = \lcm \set{1,2,3,4,6}$. Indeed, in dimension $d$, some power of $A$ has integer eigenvalues if and only if $A^K$ does, where $K=\lcm\mathcal{H}(d)$. This gives a conceptually simple, albeit computationally demanding, way of checking whether or not a given matrix admits a linear Markov partition. 

Showing that the values in $\set{1,2,3,4,6}$ are attainable is done by construction. Let $A\in \mathcal{A}_2$ with eigenvalues $\lambda_1,\lambda_2$ and let $\nu$ be the smallest natural such that $\abs{\lambda_j}^\nu \in \N$. Following the proof of Proposition \ref{prop:bounded_power}, we see that $\nu \in \set{1,2}$. If $\chi_{A^\nu}$ is reducible, it must be the product of two linear factors, so $A^\nu$ has integer eigenvalues. If $\chi_{A^\nu}$ is irreducible, then $A$ has two conjugate complex eigenvalues, so
$$
    \chi_A(x) = (x-re^{i\theta})(x-re^{-i\theta}) = x^2 - 2r\cos(\theta) x + r^2\in \Z[x].
$$
Hence, $2r\cos(\theta) \in \Z$. We also know that $e^{i\nu \theta}$ is a primitive $m^\text{th}$ root of unity for some $m$ such that $\deg\Phi_m = 2$, so $m \in \varphi^{-1}(2) = \set{3,4,6}$. 

If $n_A$ is, say, $4$, then $m = 4$ and $\nu = 2$. Thus, $\nu \theta = 2\pi/4 \mod 2\pi$, so $\theta \in \pi/4 + \set{0,\pi}$, meaning $\cos(\theta) = \pm\sqrt2/2$. This implies that
$$
    \chi_A(x) = x^2 \mp \sqrt 2 r x + r^2,
$$
so $r = k \sqrt 2$ for some $k \in \N$. The conclusion is that the rational cannonical form for $A$ is
$$
    \mtr{0&-2k^2\\ 1 & \pm2k}.
$$
The rational cannonical forms for $A$ such that $n_A \in \set{3,6}$ can be found similarly.

\section{Non-smoothness in dimension 2} \label{Section:NonSmooth}

In this section, we will prove Theorem \ref{thm:nonsmoothness in dimension 2} by handling each case separately. We will need the following well-known lemma from ergodic theory, which is a consequence of the Kronecker-Weyl Theorem. For a vector $\vec v\in \R^d$ and $S\subset \R$, we will write $S\vec v$ to mean the set $\{t\vec v :t\in S\}$. Denote also $\R^+:=[0,\infty)$.

\begin{lemma}\label{lem-irrationalslopemeansuniformlydense}
    Let $\vec v \in\R^2$ be a vector with irrational slope. Then for any $\varepsilon>0$, there exists an $N>0$ such that $p([0,N]\vec v )$ is $\varepsilon$-dense in $\T^2$. Moreover, for any $x_0\in\T^2$, $x_0+p([0,N]\vec v)$ is also $\varepsilon$-dense. 
\end{lemma}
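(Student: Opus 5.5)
The plan is to reduce the statement to the density of the orbit of a single point under an irrational rotation of the circle, and then to upgrade pointwise density to uniform density by a compactness argument. Write $\vec v=(a,b)$; irrational slope means $a\neq 0$, $b\neq 0$ and $\alpha:=b/a\notin\Q$. By replacing $\vec v$ with $\vec v/a$ (and $N$ with $N/|a|$), I may assume $\vec v=(1,\alpha)$. If $a<0$, the rescaling reverses the parameter direction. To handle this I note that the argument below only uses a segment of parameters of length $N$, and $p([-N,0]\vec v)$ is the image of $p([0,N]\vec v)$ under the isometry $x\mapsto -x$ of $\T^2$. So $\varepsilon$-density of one segment implies $\varepsilon$-density of the other.

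First I handle the base point $x_0=0$. The segment $p([0,N](1,\alpha))$ meets the vertical circle $\{0\}\times\T$ at the points $(0,\{k\alpha\})$ for $k=0,1,\dots,\lfloor N\rfloor$. By Kronecker's theorem (the one-dimensional case of Kronecker--Weyl), $\{k\alpha\}_{k\ge 0}$ is dense in $\T$. By compactness of $\T$, this density is uniform on a finite stretch: there is $M$ such that $\{k\alpha\}_{0\le k\le M}$ is $\varepsilon/2$-dense in $\T$. Concretely, one finds $q\le M$ with $\|q\alpha\|<\varepsilon/2$, and the multiples of $q\alpha$ up to $M$ then sweep the whole circle with gaps less than $\varepsilon/2$.

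Next, take any point $(s,t)\in\T^2$ with $s\in[0,1)$. I follow the line from $(0,\{k\alpha\})$ for time $s$, which lands at $(s,\{k\alpha\}+s\alpha)$. Choosing $k\le M$ with $|\{k\alpha\}-(t-s\alpha)|<\varepsilon/2$ in $\T$ puts this point within $\varepsilon/2$ of $(s,t)$. This uses parameters $k+s\le M+1$, so $N=M+1$ works for $x_0=0$.

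For a general base point $x_0$, translation is an isometry of $\T^2$, so $x_0+p([0,N]\vec v)$ is $\varepsilon$-dense whenever $p([0,N]\vec v)$ is. The same $N$ therefore works uniformly in $x_0$, which proves the ``moreover'' part.

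The main obstacle is the uniformity in the first step, namely extracting a finite $M$ from density. I would handle it either by the pigeonhole argument on the gaps described above, or by covering $\T$ with finitely many $\varepsilon/4$-balls and choosing for each ball one $k$ with $\{k\alpha\}$ in it. Everything after that is bookkeeping.
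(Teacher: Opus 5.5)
Your argument is correct, but it is organized differently from the paper's. The paper takes as input the density of the forward ray $p(\R^+\vec v)$ in $\T^2$, i.e.\ the two-dimensional Kronecker--Weyl statement for the linear flow. It then applies compactness directly in $\T^2$: finitely many $\varepsilon$-balls centred at points of the ray cover the torus, and $N$ is the largest parameter among the centres.

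You instead use only the one-dimensional fact that $\{k\alpha\}$ is dense in $\T$ and obtain uniformity on the section $\{0\}\times\T$. You then flow for a time $s\in[0,1)$ to spread that finite $\varepsilon/2$-dense set over the whole torus, paying one extra unit of parameter ($N=M+1$).

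Your route is more self-contained. In the Dirichlet version it does not even need Kronecker's theorem: choose $q$ with $0<\operatorname{dist}(q\alpha,\Z)<\varepsilon/2$, then take $M\ge q\lceil 1/\operatorname{dist}(q\alpha,\Z)\rceil$, and this gives an explicit $N$. Note that $q$ must be chosen first and $M$ second; your phrasing has this order slightly tangled.

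The cost of your route is the normalization to $(1,\alpha)$, which forces you to treat $a<0$ separately via the isometry $x\mapsto -x$. The paper's version needs no such case split, because forward-ray density holds for either orientation of $\vec v$. The translation step for a general base point $x_0$ is identical in both proofs.
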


\begin{proof}
    Let $\varepsilon > 0$. Since $\T^2\subset \bigcup_{x\in p(\R^+\vec{v})}B(x,\varepsilon)$, by compactness, we have finitely many $x_1,...,x_n\in p(\R^+\vec{v})$ such that $\T^2\subset \bigcup_{i=1}^nB(x_i,\varepsilon)$. But $p$ is a bijection on $\R^+\vec{v},$ so for each $1\leq i\leq n$ there exists a unique $t_i\in \R^+$ such that $x_i=p(t_i\vec{v})$. Thus, setting $N=\max_{1\leq i \leq n}\{t_i\}$,  $p([0,N]\vec{v})$ is $\varepsilon$-dense in $\T^2$. Finally, $\varepsilon$-denseness of translations is immediate.
\end{proof}

We now move to the proofs of the three cases of Theorem \ref{thm:nonsmoothness in dimension 2}. Throughout, let $A$ be a $2\times 2$ expanding integer matrix. We will be working with curve parameterizations $\gamma:[0,1]\to \R^2$ and to avoid notation that is too cumbersome, we occasionally allow ourselves write $\gamma$ to mean the set $\gamma([0,1])$. 

\subsection{Real eigenvalues}
\label{subsection: Real Eigenvals}

Suppose $\lambda_1,\lambda_2\in \R\setminus\Q$ are the eigenvalues of $A$ arranged such that $|\lambda_1|>|\lambda_2|$. Take the associated eigenvectors $\vec v_1, \vec v_2$ to be unit vectors. Recall that our objective is to show that given $\M$ a Markov partition for $f=f_A$, $\partial\M$ cannot contain a $C^1$ arc, which we will do by showing that existence of such a curve would imply that $\partial \M$ is dense, contradicting Lemma \ref{lem:boundary not dense}.

Let us suppose towards this contradiction that such a curve $C$ exists. Let $\gamma:[0,1]\to \R^2$, $x_0 \in \R^2$ so that $\gamma(0) = 0$ and $\gamma(t) + x_0$ is a parametrization of a lift of $C$ with nowhere zero velocity. Let $\proj_{\vec v_1}: \R^2 \to \R \vec v_1$ denote the projection onto $\R \vec v_1$ along $\R \vec v_2$ and define $\proj_{\vec v_2}$ analogously. Write $\gamma(t) = \gamma_1(t)\vec v_1 + \gamma_2(t) \vec v_2$ so that $\proj_{\vec v_1}(\gamma(t)) = \gamma_1(t) \vec v_1$.

Suppose first that on some subinterval of $[0,1]$ with positive length, $\gamma^\prime(t)$ is parallel to $\vec v_2$. Then $\gamma(t)$ is linear and parallel to $\vec v_2$ on this interval. Since $\vec v_2$ has irrational slope,  $$\partial\M=\bigcup_{k=0}^\infty f^k(\partial\M)\supset \bigcup_{k=0}^\infty f^k\left( \vphantom{\prod} p(x_0) +  p\gamma([0,1])\right)$$ is dense in $\T^2$, a contradiction. 

We may thus suppose otherwise, and without loss of generality adjust our interval so that $\gamma_1^\prime$ is non-vanishing. Additionally, we may traverse $\gamma$ so that $\gamma^\prime_1$, hence also $\gamma_1$, is positive. By compactness of the unit interval, there exist $m, M > 0$ such that for all $t\in [0,1]$,
$$
    m \leq \gamma_1^\prime(t), \text{ and } \abs{\gamma_2^\prime(t)} \leq M.
$$
Letting $\|\cdot\|$ denote the Euclidean norm on $\R^2$, this means
\begin{align*}
    \norm{\proj_{\vec v_1}A^{2n}\gamma(t)} & =\lambda_1^{2n} \gamma_1(t) \\
    & = \lambda_1^{2n} \int_0^{t} \gamma_1^\prime(s)ds\\
    & \geq \lambda_{1}^{2n} t m
\end{align*}
At the same time,
\begin{align*}
    \norm{\proj_{\vec v_2}A^{2n}\gamma(t)} &\le \lambda_2^{2n}\int_0^{t} \abs{\gamma_2^\prime(s)}ds\\
    &\leq \lambda_2^{2n} t M.
\end{align*}

Let now $\varepsilon > 0$. By Lemma \ref{lem-irrationalslopemeansuniformlydense}, there exists an $N > 0$ such that for all $y_0 \in \T^2$, $y_0 + p([0,N]\vec v_1)$ is $\varepsilon/2$-dense in $\T^2$. Set $R = N/m$. Since $\abs{\lambda_1} > \abs{\lambda_2}$, there exists an $n\in \N$ such that for all $0 \le r \le R$, 
\begin{align*}
    \norm{A^{2n}\gamma(r\lambda_1^{-2n}) - \proj_{\vec v_1}A^{2n}\gamma(r\lambda_1^{-2n})} & = \norm{\proj_{\vec v_2}A^{2n}\gamma(r\lambda_1^{-2n})}\\
    &\leq \brac{\frac{\lambda_2}{\lambda_1}}^{2n} rM \\
    &\leq \brac{\frac{\lambda_2}{\lambda_1}}^{2n} RM \\
    &< \varepsilon/2
\end{align*}
Additionally,
$$
    \norm{\proj_{\vec v_1}A^{2n}\gamma(R \lambda_1^{-2n})} \geq Rm = N \quad \text{and} \quad \proj_{\vec v_1}A^{2n}\gamma(0)=0,
$$
so for all $0 \leq \ell \leq N$, there exists an $r \in [0,R]$ such that $\proj_{\vec v_1}A^{2n}\gamma(r\lambda_1^{-2n}) = \ell \vec v_1$ by the Intermediate Value Theorem. In other words, $[0,N]\vec v_1$ is contained in an $\varepsilon/2$-neighbourhood of $A^{2n} \gamma$. Projecting down to $\T^2$, we see that
$$
    \partial \mM \supset f^{2n}C = pA^{2n}(x_0+\gamma([0,1]))
$$
is $\varepsilon$-dense in $\T^2$ by the $\varepsilon/2$-density of $p(A^{2n}x_0) + p([0,N]\vec v_1)$. Our choice of $\varepsilon$ was arbitrary, so $\partial \mM$ is dense in $\T^2$: we have reached the desired contradiction.

\begin{figure}
    \centering
    \begin{overpic}[width=0.4\textwidth]{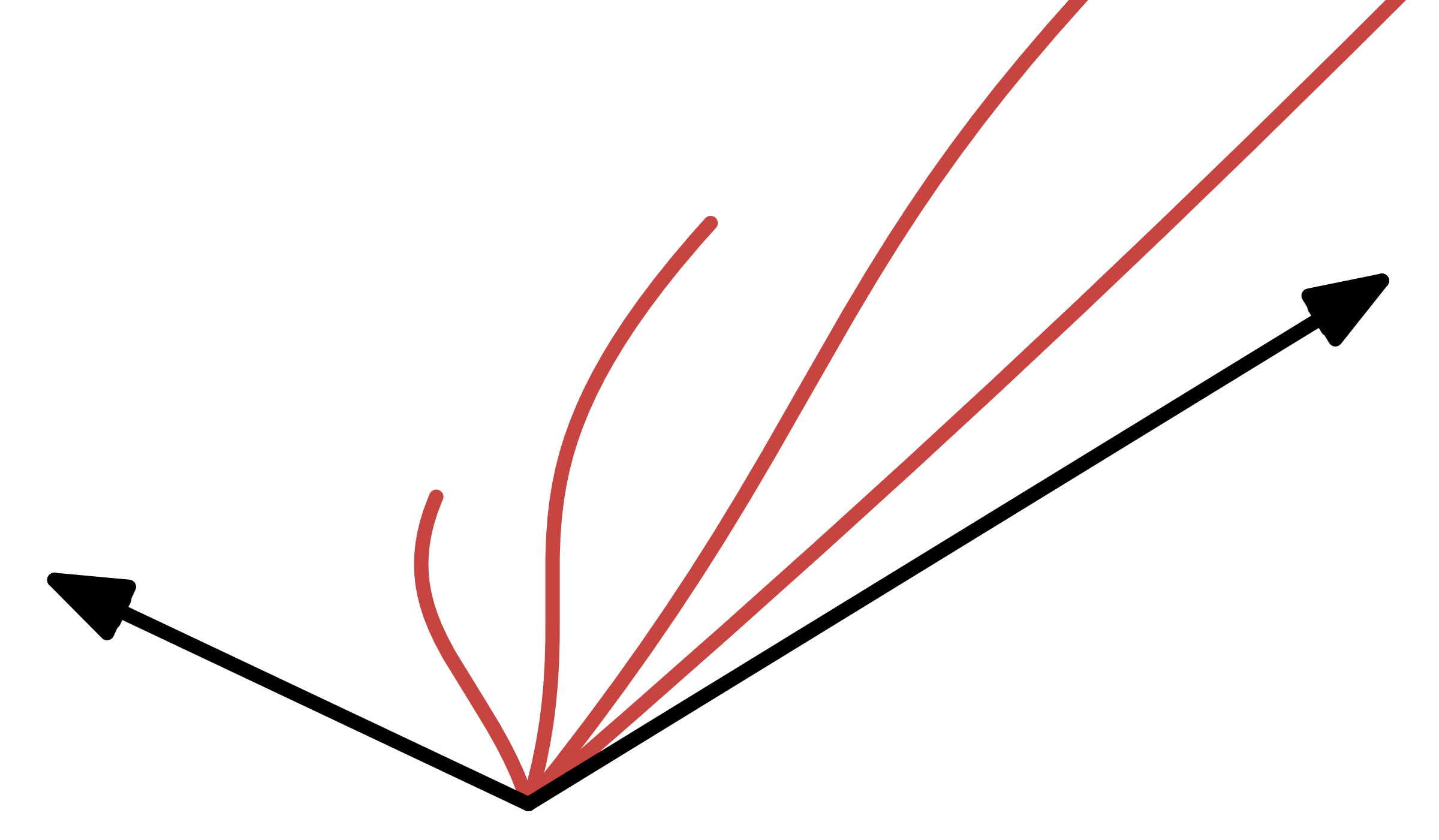}
        \put(75,225){{\Large $\vec v_2$}}
        \put(910,260){{\Large $\vec v_1$}}
        \put(250,250){{\Large $\gamma$}}
        \put(320,380){{\Large $A\gamma$}}
        \put(510,470){{\Large $A^2\gamma$}}
        \put(700,465){{\Large $A^3\gamma$}}
    \end{overpic}
    \begin{overpic}[width=0.4\textwidth]{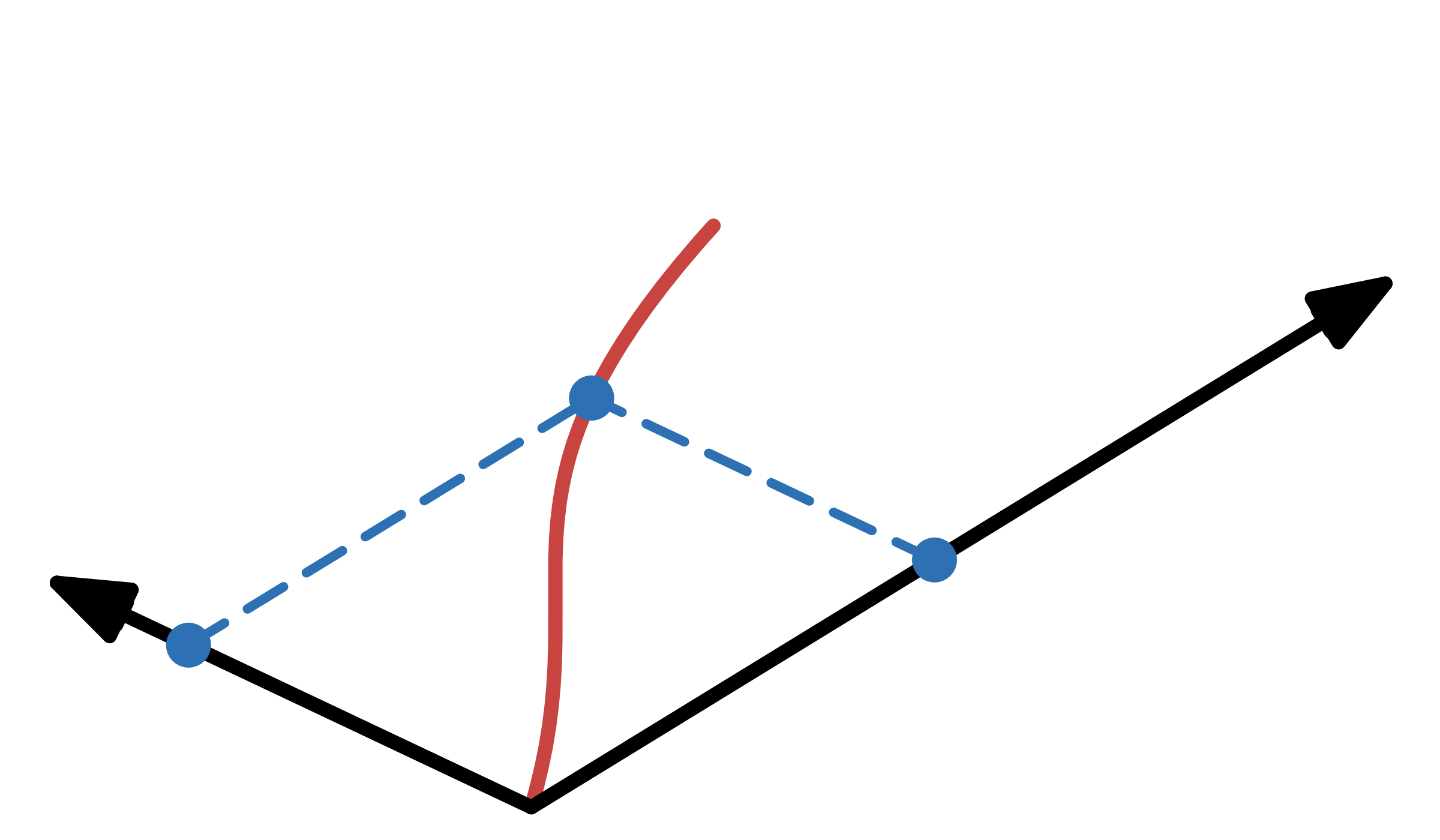}
        \put(195,315){{\Large $A\gamma(t)$}}
        \put(-120,10){{\large $\proj_{\vec v_2} A\gamma(t)$}}
        \put(650,100){{\large $\proj_{\vec v_1} A\gamma(t)$}}
    \end{overpic}
    \caption{ Iterates of $\gamma$ under a matrix $A$ with real, irrational eigenvalues of different modulus and the projection maps onto the respective eigenlines.}
    \label{fig:irrational_eigenvalues}
\end{figure}

\subsection{Complex eigenvalues}

Suppose $A$ has complex eigenvalues $\lambda e^{i\phi},\lambda e^{-i\phi}$ such that $\phi\notin \pi\Q$ and $\lambda > 1$. It is well known that up to a linear change of coordinates, $A$ acts on the plane by scaling by $\lambda$ and rotating by $\phi$. Let $T \in GL_2(\R)$ be such that $A = T\lambda R_{\phi} T^{-1}$ where $R_\phi$ is a rotation matrix.

Let $\M$ be a Markov partition for $f_A$. Following Definition \ref{def:smoothness MP}, we suppose towards a contradiction that a continuous injection $\tilde\gamma:[0,1]\to \T^2$ exists with $\tilde\gamma'(t)\neq 0$ for some $t$. Without loss of generality, we can parametrize this curve as follows. Let $x_0 \in \R^2$, $\gamma:[0,1] \to \R^2$ continuous and injective such that $\gamma(0) = 0$, $\gamma^\prime(0) \neq 0$, and $T(x_0 + \gamma(t))$ parametrizes a lift of $\tilde\gamma$.

Let $\vec{v}$ be an arbitrary vector with irrational slope. Similarly to Section \ref{subsection: Real Eigenvals}, it suffices by Lemma \ref{lem-irrationalslopemeansuniformlydense} to show that for all $N> 0$ and $\varepsilon > 0$, there exists an $n$ such that $[0,N] \vec v$ is contained in an $\varepsilon$-neighbourhood of
$$
    A^n T\gamma([0,1]) = T \lambda^n R^n_\phi\gamma([0,1]).
$$
This would imply that $\partial\M$ is dense in $\T^2$, contradicting Lemma \ref{lem:boundary not dense}. However, $T$ is a homeomorphism and $\lambda^n R^n_\phi\gamma([0,1])$ is compact. So if $\mathcal{U}$ is an $\varepsilon$-neighbourhood of $T\lambda^n R^n_\phi\gamma([0,1])$, then it is not hard to show that there exists an $\varepsilon^\prime$ and an $\varepsilon^\prime$-neighbourhood $V$ of $\lambda^n R^n_\phi\gamma([0,1])$ satisfying $T(V) \subset \mathcal{U}$. That is to say, it suffices to show that $[0,N] T^{-1} \vec v$ is contained in an $\varepsilon$-neighbourhood of $\lambda^n R^n_\phi\gamma([0,1])$.

Fix $\varepsilon, N > 0$. Let $\vec w = T^{-1} \vec v$, rescaled to have unit length, and for non-zero $\vec a, \vec b \in \R^2$, let $\theta(\vec a, \vec b) \in (-\pi,\pi]$ denote the signed angle between $\vec a$ and $\vec b$. Since $\phi \notin \pi\Q$, we can pick a sequence $(n_k)_k$ such that $$ \lim_{k\to\infty}\theta( R^{n_k}_\phi\gamma^\prime(0), \vec w) = 0.$$ As $R_\phi$ is conformal, $$\theta(R_\phi^{n_k} \gamma(t), R_\phi^{n_k} \gamma^\prime(0)) = \theta(\gamma(t), \gamma^\prime(0))$$ for all $k$, and by the definition of $\gamma^\prime(0)$, $$ \lim_{t\to 0}\theta(\gamma(t), \gamma^\prime(0)) = 0.$$ 

Let $\delta<\pi/2$ small enough so that $\tan\delta<\varepsilon/N$. Then, there exist $\eta,K > 0$ such that whenever $t\in (0,\eta]$ and $k > K$, the following holds:
\begin{align*}
    \abs{\theta(R_\phi^{n_k} \gamma(t), \vec w)} & \le \abs{\theta(R_\phi^{n_k} \gamma(t), R_\phi^{n_k}\gamma^\prime(0))} + \abs{ \theta(R_\phi^{n_k}\gamma^\prime(0),  \vec w)}\\
    & = \abs{\theta( \gamma(t), \gamma^\prime(0))} + \abs{\theta(R_\phi^{n_k}\gamma^\prime(0),  \vec w)}\\
    & < \delta.
\end{align*}
In particular, our choice of $\delta$ means we have $\vec w \cdot R_\phi^{n_k} \gamma(t) > 0$. Fix $k > K$ so that
$$
    \vec w \cdot \lambda^{n_k} R_\phi^{n_k} \gamma(\eta) > N.
$$

Let $p_{\vec{w}}:\vec {a} \mapsto (\vec {a} \cdot \vec {w}) \vec {w}$ denote the orthogonal projection onto $\R \vec w$. As in Section \ref{subsection: Real Eigenvals}, the Intermediate Value Theorem now tells us that for every $0 \le \ell \le N$ there exists a $t\in [0,\eta]$ such that $$p_{\vec{w}}\lambda^{n_k}R_{\phi}^{n_k}\gamma(t)=\ell\vec{w}.$$ Moreover, for $t\neq 0$,
\begin{align*}
    \norm{\lambda^{n_k} R_\phi^{n_k} \gamma(t) -p_{\vec w} \lambda^{n_k}  R_\phi^{n_k} \gamma(t)} & =  \norm{p_{\vec w}\lambda^{n_k} R_\phi^{n_k} \gamma(t)} \abs{\tan\theta(R_\phi^{n_k} \gamma(t), \vec w)}\\
    & < N \tan \delta \\
    & < \varepsilon
\end{align*}
and the same inequality is trivially satisfied when $t=0$. 

We conclude that $[0,N]\vec w$ is contained in an $\varepsilon$-neighbourhood of $\lambda^{n_k}R_\phi^{n_k}\gamma([0,\eta]) \subset \lambda^{n_k}R_\phi^{n_k}\gamma([0,1])$, as desired.

\begin{figure}
    \centering
    \includegraphics[width=0.5\linewidth]{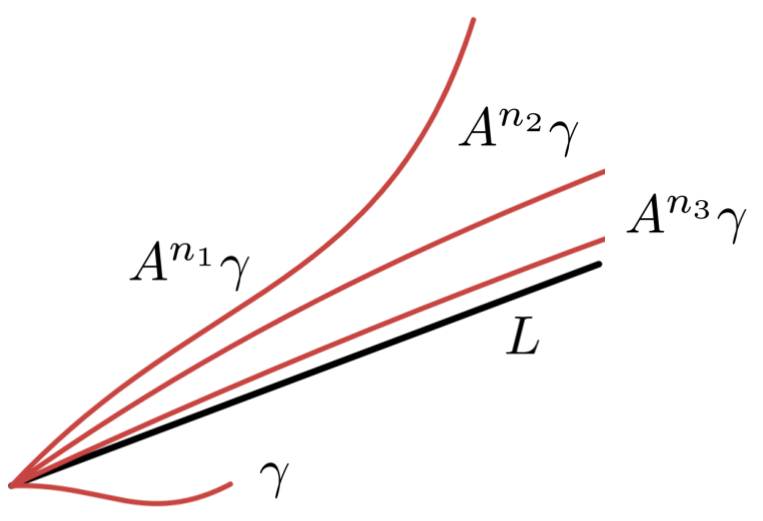}
    \caption{Iterates of $\gamma$ under a matrix $A$ with complex eigenvalues whose arguments do not lie in $\pi\Q $. $L=\R^+\vec{w}$ and $n_k$ are chosen so that $A^{n_k} \gamma^\prime(0)$ approaches the direction of $L$ counterclockwise.}
    \label{fig:complex_eigenvalues}
\end{figure}

\subsection{Non-diagonalizable matrices}
If $A$ is not diagonalizable, then $A$ is similar over $\R$ to some Jordan block $J$. It is straightforward to show that the eigenvalue $k$ of $A$ is an integer, so $A$ is similar over $\Q$ to $J \in M_2(\Z)$. Note that by Proposition \ref{prop:MPs give Markov tilings} and Lemma \ref{lem:MPforsimilarmatrices}, it is sufficient to show that $J$ does not admit a smooth Markov partition. It is also straightforward to show that the set
$$
    D = \Set{\alpha \in \R : \Set{k^{2n} \alpha \ \text{ mod } 1}_{n\in\N} \text{ is dense in }[0,1]}
$$
is dense in $\R$. 

Suppose for the sake of contradiction that $J$ has a smooth Markov partition $\M$. We will once again show that smoothness of $\partial\M$ contradicts Lemma \ref{lem:boundary not dense}. However, unlike the previous cases, we will do so by finding a particular curve contained in $\partial \M$ with some additional properties.

Since $J$ is expanding, $\abs{k} \ge 2$. Hence, $\partial \M$ cannot purely consist of a union of lines parallel to the $x$-axis. Such a partition consists of horizontal strips, which would get wrapped $k$-fold around the torus horizontally after applying $J$. Thus, one would be able to find two points in the interior of some rectangle that have the same image in the interior of another rectangle, contradicting the Markov property (MP).

Therefore, there exists an arc $C \subset \partial\M$ with a lift parametrized by a $C^1$ map $\gamma:[0,1] \to \R^2$ such that the vertical component of $\gamma^\prime$ is non-zero. For a vector $\vec{v}\in \R^2$ written in the standard basis as $\vec{v}=x\vec e_1+y\vec e_2 $, let $$p_1(\vec{v})=x,\quad p_2(\vec{v})=y.$$ Since the vertical component of $\gamma^\prime$ is non-vanishing, $p_2 \gamma$ contains an open interval, so we may assume $p_2\gamma(0) \in D$ by the density of $D$. We also traverse $\gamma$ so that $p_2\gamma'$ has the same sign as $k$. 

Observe that

$$
    J^n = \brac{\mtr{k & 0 \\ 0 & k} + \mtr{0 & 1 \\ 0 & 0}}^n = \sum_{l=0}^n {n \choose l} \mtr{k & 0 \\ 0 & k}^{n-l} \mtr{0 & 1 \\ 0 & 0}^l = \mtr{k^n & nk^{n-1} \\ 0 & k^n},
$$
so for any $a,b\in \R$, $$J^{2n}\begin{pmatrix} a \\ b \end{pmatrix} = \begin{pmatrix}
    k^{2n}a+2nk^{2n-1}b \\
    k^{2n}b
\end{pmatrix}.$$ 

We use a similar convergence argument as in Section \ref{subsection: Real Eigenvals}, but here the $x$-axis plays the role of the dominant eigendirection. By continuity and compactness of $[0,1]$ there exist constants $M_1,M_2>0$, and $m_2\neq 0$ with the same sign as $k$ and $p_2\gamma'$ such that $$|p_1\gamma'(t)|\leq M_1, \quad 0< |m_2|\leq |p_2\gamma'(t)|\leq M_2.$$ Let $\varepsilon>0$ and $x\in[0,1)^2$ be arbitrary. Since $p_2J^{2n}\gamma(0)=k^{2n}p_2\gamma(0)$ and $p_2\gamma(0)\in D$, there exists an $$n>\frac{1+\varepsilon\frac{M_1}{M_2}}{2\varepsilon\frac{m_2}{kM_2}}$$ for which $|p_2(x)-(p_2J^{2n}\gamma(0)\ \text{ mod } 1)|<\varepsilon.$

Let $$z=p_1(x)-p_1J^{2n}\gamma(0)\ \text{ mod } 1$$
and denote $$\eta_n:=\frac{\varepsilon}{k^{2n}M_2}.$$

Then by our choice of $n$, \begin{align*}
    p_1J^{2n}(\gamma(\eta_n)-\gamma(0))&=\int_0^{\eta_n} p_1J^{2n}\gamma'(s)ds \\
    &=\int_0^{\eta_n} k^{2n}p_1\gamma'(s)+2nk^{2n-1}p_2\gamma'(s)ds \\
    &\geq -\eta_nk^{2n}M_1 + \eta_n2nk^{2n-1}m_2 \\
    &= -\varepsilon\frac{M_1}{M_2}+2\varepsilon n\frac{m_2}{kM_2} > 1
\end{align*}
Thus, by continuity and the Intermediate Value Theorem, there exists a $t\in[0,\eta_n]$ such that $$p_1J^{2n}(\gamma(t)-\gamma(0))=z.$$ Moreover, \begin{align*}
    \big|p_2J^{2n}(\gamma(t)-\gamma(0))\big|&\leq \int_0^t\big|p_2J^{2n}\gamma'(s)\big|ds \\
    &= \int_0^t \big|k^{2n}p_2\gamma'(s)\big|ds \\
    &\leq tk^{2n}M_2 \\
    &\leq \eta_nk^{2n}M_2 \\
    &= \varepsilon
\end{align*}

By construction, there is some $\Delta\in \Z^2$ for which \begin{align*}
    \big\|x-J^{2n}\gamma(t)-\Delta\big\| &\leq \big|p_1(x-J^{2n}\gamma(t)-\Delta)\big| + \big|p_2(x-J^{2n}\gamma(t)-\Delta)\big| \\
    &\leq \big|p_1(x)-z-p_1J^{2n}\gamma(0)-p_1(\Delta)\big| \\
    &\quad\;\; + \big|p_2(x)-p_2J^{2n}\gamma(0)-p_2(\Delta)\big| + \big|p_2J^{2n}(\gamma(t)-\gamma(0))\big| \\
    &<0+\varepsilon+\varepsilon=2\varepsilon
\end{align*} 

Projecting down to $\T^2$, this means $d\big(p(x),f^{2n}p\gamma(t)\big)<2\varepsilon$. Our choices for $x$ and $\varepsilon$ were arbitrary, so we conclude that 
$$
    \partial\M=\bigcup_{k=0}^\infty f^k(\partial \M) \supset \bigcup_{k=0}^\infty f^{2k} C 
$$
is dense in $\T^2$. This contradicts Lemma \ref{lem:boundary not dense}, and concludes the proof of Theorem \ref{thm:nonsmoothness in dimension 2}.\qed 
\\

\begin{figure}
    \centering
    \includegraphics[width=0.5\linewidth]{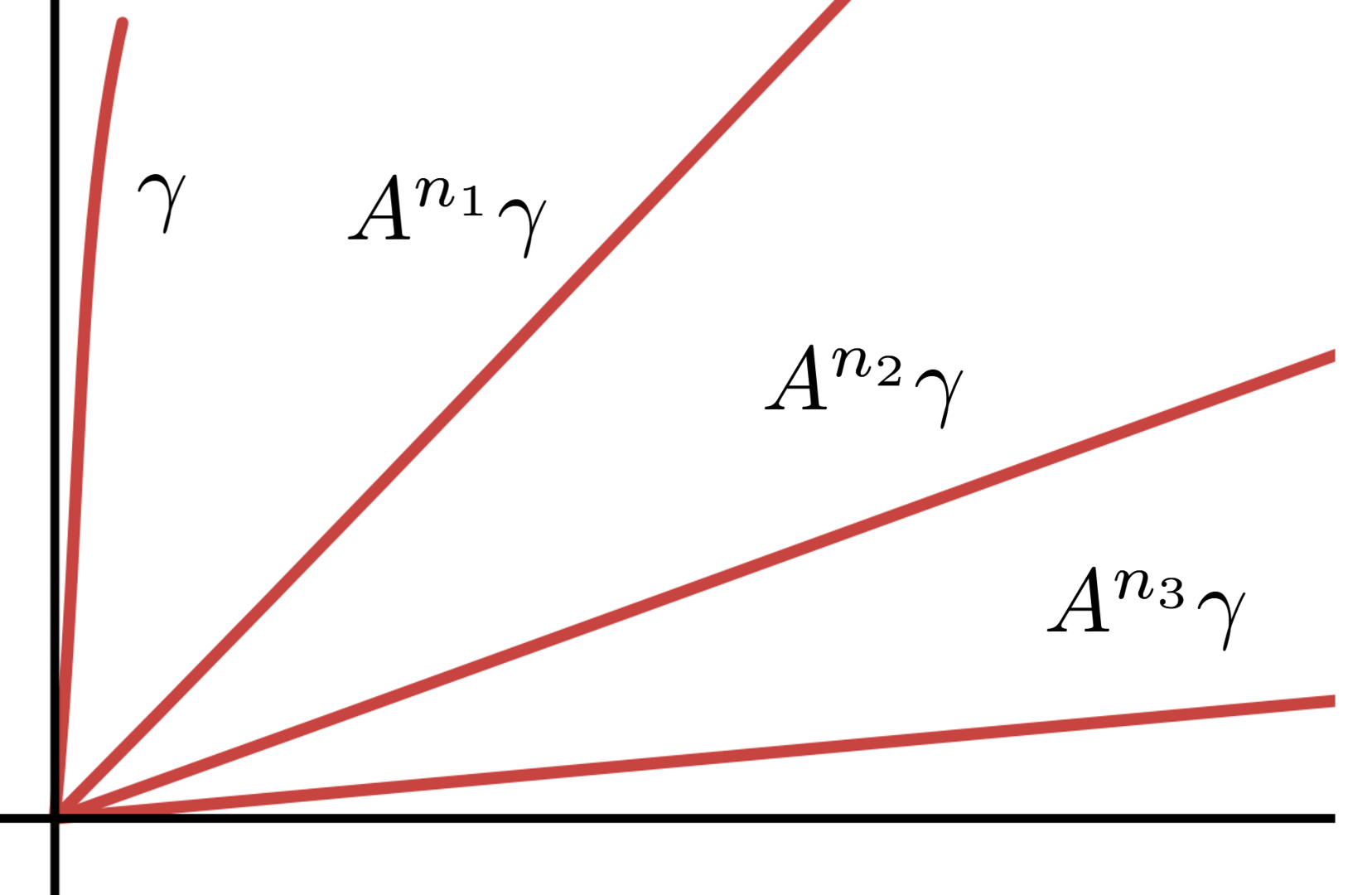}
    \caption{Iterates of $\gamma$ under a Jordan matrix $A$. The $x$-axis is scaled down and $n_k$ are some large integers.}
    \label{fig:jordan_matrix_curves}
\end{figure}

The contradiction in the proof above only arises from the existence of a $C^1$ curve with vertical change contained in $\partial \M$, which does not rule out linear behaviour in the horizontal direction. Indeed, in the case that $$A=\begin{pmatrix}
        2 & 1 \\ 0 & 2
\end{pmatrix}$$ a construction due to Bedford \cite{Be} yields the Markov partition in Figure \ref{fig:hybrid MP}, for which the boundary along the $x$-axis is linear, but the vertical\footnote{For lack of better terminology.} component is fractal. In fact, there are straight line segments at every dyadic rational.

Figure \ref{fig:hybrid MP} is generated by computer code executing the first few steps of the iterative procedure in \cite{Be}, using anticlockwise perturbations of integer-valued vectors. The dashed green line indicates a fundamental domain for $\T^2$, so the displayed partition consists of four rectangles. 

For general Jordan matrices $$\begin{pmatrix}
    k & 1 \\ 0 & k
\end{pmatrix} \;\text{ with }\; k\in \Z\setminus \{-1,0,1\},$$ the procedure yields a similar partition with $k^2$ rectangles. In other words, Jordan matrices allow for Markov partitions with hybrid behaviour. Using this construction, we can build a hybrid Markov partition for any non-diagonalizable $2\times 2$ matrix $A$.

\begin{figure}
    
    \centering
        \resizebox{0.5\linewidth}{!}{\input{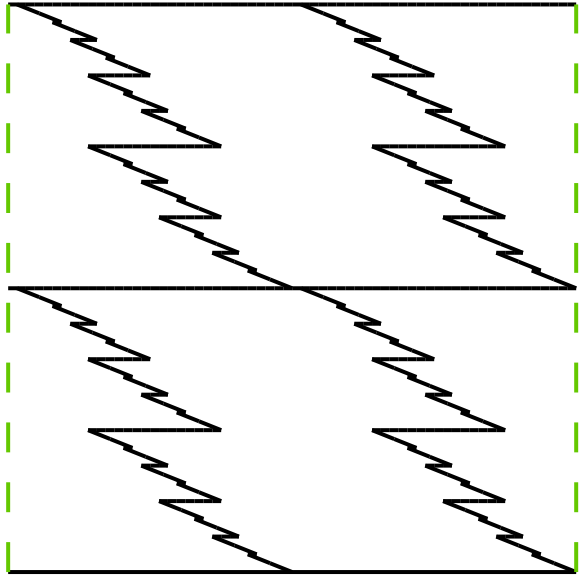}}
        
    \caption{A Markov partition for $\begin{pmatrix}
        2 & 1 \\ 0 & 2
    \end{pmatrix}$.}
    \label{fig:hybrid MP}
\end{figure}

\begin{theorem}\label{thm:Jordans have hybrid MP}
    If $A$ is a $2\times 2$ expanding, non-diagonalizable integer matrix, then $f_A$ admits a hybrid Markov partition.
\end{theorem}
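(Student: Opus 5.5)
First reduce to a Jordan block. As in the proof of Theorem \ref{thm:nonsmoothness in dimension 2}, $A$ has an integer eigenvalue $k$ with $|k|\ge 2$. It is similar over $\Q$ to $J=\begin{pmatrix} k & 1\\ 0 & k\end{pmatrix}$: pick an integer eigenvector and an integer generalized eigenvector, and rescale so that the off-diagonal entry is $1$.

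By Lemma \ref{lem:MPforsimilarmatrices}, a Markov tiling for $J$ is carried to a Markov tiling for $A$ by a linear map $\frac1K T$. Since $T$ is invertible, this map takes $C^1$ arcs to $C^1$ arcs, and a non-piecewise-$C^1$ boundary stays non-piecewise-$C^1$. Hence hybridity transfers, and together with Proposition \ref{prop:MPs give Markov tilings} it suffices to build a hybrid Markov tiling for $J$.

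For $J$ I will construct the tiling directly rather than rely on the pictures from \cite{Be}. Use the horizontal lines $y\in \frac{1}{k}\Z$ together with curves that are graphs $x=g(y)+\frac{j}{k}$, $j\in\Z$, over each horizontal strip. Let the tile be $P=\{(x,y): 0\le y\le 1,\ g(y)\le x\le g(y)+1\}$, suitably scaled. The Markov property $JP=\bigcup P_i$ forces a self-affine functional equation for $g$. The map $J$ sends $(g(y),y)$ to $(kg(y)+y,\,ky)$, so the image of the left boundary, cut into the $k$ strips $ky\in[i,i+1]$, must again consist of translates of the graph of $g$. This gives
\[
k\,g\!\left(\tfrac{y+i}{k}\right)+\tfrac{y+i}{k} = g(y)+c_i \quad (0\le i<k,\ y\in[0,1]),
\]
for integer constants $c_i$ chosen so that the pieces match at the endpoints. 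Equivalently, $g=\Phi(g)$, where $\Phi(h)(s)=\frac1k\bigl(h(ks-i)+c_i\bigr)-\frac{s}{k}$ on $s\in[i/k,(i+1)/k]$. This $\Phi$ is a contraction by factor $1/|k|$ on continuous functions once the endpoint constraint $g(1)-g(0)$ is fixed compatibly. So a unique continuous $g$ exists, of Takagi type.

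Then I verify the tiling axioms. Translation invariance and finiteness are by construction. The functional equation gives the Markov property, with each $JP$ a union of $k^2$ translates. Smallness follows after a rescaling by $1/(K|\det J|)$, as in Lemma \ref{lem:MPforpowers}. The horizontal segments $y\in\frac1k\Z$ are genuine line segments in $\partial\M$, so the boundary contains a $C^1$ arc.

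For non-smoothness I do not need to study $g$ directly. If $\partial\M$ were piecewise $C^1$, then $\M$ would be smooth, contradicting part 3 of Theorem \ref{thm:nonsmoothness in dimension 2}. Hence $\M$ is hybrid.

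The main obstacle is making the functional equation consistent. I need to choose the constants $c_i$ and the endpoint value $g(1)-g(0)$ so that the $k$ image pieces glue continuously and the resulting region tiles with the correct translation lattice. Summing the endpoint conditions should pin down $g(1)-g(0)=-\frac{1}{k-1}$ modulo integers, or a similar value. If an integer-lattice mismatch appears, I will first try passing to $J^2$, where the eigenvalue $k^2>0$ avoids sign issues. I would then return to $J$ via Lemma \ref{lem:MPforpowers}, which preserves the horizontal segments and the other properties used above.
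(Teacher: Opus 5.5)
Your reduction to $J=\begin{pmatrix} k&1\\0&k\end{pmatrix}$ via Lemma \ref{lem:MPforsimilarmatrices} is fine, as are using the horizontal segments as $C^1$ arcs and appealing to part 3 of Theorem \ref{thm:nonsmoothness in dimension 2} for non-smoothness. The gap is in the construction itself: \emph{no continuous $g$ satisfies your functional equation}, whatever the constants $c_i$.

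Take $k>0$ and set $d=g(1)-g(0)$. The left edge of $JP$ is the graph of $h(Y)=k\,g(Y/k)+Y/k$ over $[0,k]$. Its net horizontal displacement is therefore $h(k)-h(0)=kd+1$, where the extra $1$ comes from the shear. Now suppose $h$ agrees with $g(Y-i)+c_i$ on each $[i,i+1]$ and is continuous, as it is whenever $g$ is. Then $h(i+1)-h(i)=d$ for every $i$, so the displacement is $kd$. Hence $kd+1=kd$, which is absurd. Concretely, for $k=2$ the fixed-point conditions at $s=0$ and $s=1$ give $g(0)=c_0$ and $g(1)=c_1-1$, while gluing at $s=\frac12$ requires $g(1)+c_0=g(0)+c_1$, i.e. $-1=0$.

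This obstruction does not depend on the $c_i$. It does not depend on the rescaling either, since homotheties commute with $J$, nor on the translation lattice, since shifting successive rows only changes the $c_i$. For $k<0$ the mismatch is $-1$. So your contraction argument on continuous functions cannot be set up at all. Your fallback to $J^2$ fails for the same reason: for $J^n$ the same computation gives a mismatch of $\pm nk^{n-1}$, the off-diagonal entry of $J^n$, which never vanishes.

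The missing idea is that, within your ansatz, the shear's extra horizontal displacement must be absorbed by horizontal jumps in the ``vertical'' boundary. In $JP$ these jumps must sit at the junction heights $Y=i$. Pulling back by $J$, they sit at heights $i/k$ in $P$, and recursively at every $k$-adic rational height. So the side of a tile cannot be a continuous graph. It must be a completed graph containing horizontal segments at a dense set of heights. This is exactly the structure of Bedford's partition (the straight segments at every dyadic rational in Figure \ref{fig:hybrid MP}), which is why the paper simply invokes \cite{Be} together with Lemma \ref{lem:MPforsimilarmatrices}.

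Your approach can be repaired along these lines:
\begin{enumerate}
\item Define $\Phi$ on half-open intervals $[i/k,(i+1)/k)$, with the obvious endpoint conventions.
\item Solve $g=\Phi(g)$ in the Banach space of regulated functions on $[0,1]$ with the sup norm. There $\Phi$ is a contraction with factor $1/|k|$ for any integers $c_i$, and the fixed point has jumps at the $k$-adic rationals.
\item Take each tile to be the closure of the region between the graphs of $g$ and $g+1$, so that its sides include the jump segments.
\item Verify properness, disjointness of interiors and the Markov property for these closed tiles.
\end{enumerate}
As written, though, the Takagi-type continuous $g$ on which your proof rests does not exist.
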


\begin{proof}
    Combine the construction from \cite{Be} with Lemma \ref{lem:MPforsimilarmatrices}.
\end{proof}

\section{Estimating Hausdorff dimension} \label{Section:FractalDim}

In this section, we estimate the Hausdorff dimension $\dim_H(\partial\M)$, using techniques from symbolic dynamics. We remark that some results of Urbanksi and Mañé \cite{U, Ma} can be applied to estimate the box dimension, or capacity, of $\partial\M$, and that for the explicit construction of Bedford \cite{Be}, some bounds from Dekking apply. Moreover, a recent result due to Ishii and Oka \cite{IO} gives a precise formula for $\dim_H(\partial\M)$, under a number of assumptions on the matrix inducing the endomorphism and the particular construction of $\M$. The bound given in Theorem \ref{thm-dimH} strongly resembles their formula when $A$ is conformal. The theorem is also in the spirit of the relation between Hausdorff dimension, entropy, and Lyapunov exponents developed by Manning and Young \cite{M1,Y}.

The argument here will be given in more generality. Recall from Section \ref{Section:Prelims} that for a positively expansive map $f:X\to X$ from a manifold onto itself, \cite{CR} shows we can find a compatible metric and  $\lambda>1,\varepsilon>0$ for which \begin{align*}d(x,y)<\varepsilon \implies d(fx,fy)\geq \lambda d(x,y).\tag{*}\end{align*} For this section, we will only work with a metric that is \textit{adapted} in this sense. In the case of expanding toral endomorphisms, this metric need not be the standard metric inherited from the Euclidean metric on $\R^d$. However, using the real Jordan form of an expanding integer matrix $A$, one can always find a metric for which (*) holds, that is also bi-Lipschitz equivalent to the standard metric. In particular, this means Hausdorff dimensions are preserved between the two metrics.  

Suppose we are given an expanding dynamical system $f:X\to X$ where $X$ is equipped with an adapted metric and $\M$ is a Markov partition for $f$. Any expanding dynamical system is positively expansive, with expansive constant $c=\varepsilon$. By refining sufficiently often, we may assume that for each rectangle $R\in \M$ we have that $\operatorname{diam}(R) < c/2$. Let, $T$ be the corresponding transition matrix and $\pi:\Sigma_T\to X$ be the SFT and covering map coming from $T$. Then, for any $x,y\in \Sigma_T$, we have that $$\pi(x)=\pi(y) \iff \forall i\in \N, R_{x_i}\cap R_{y_i}\neq \emptyset.$$

Mirroring a classical construction of Manning \cite{M2}, define a graph with vertex set $$\{(a,b): 1\leq a,b \leq |\M|, a\neq b, R_a\cap R_b \neq \emptyset\}$$ and an edge $(a,b)\to (c,d)$ if and only if $T_{ac}=T_{bd}=1$. Call the (one-sided) shift on this new graph $E_2$. We now define a map $\pi_2:E_2\to X$ by $$\pi_2((x_0,y_0),(x_1,y_1),...) = \pi(x_0x_1...).$$

It follows from Theorem 1.5 of \cite{AKS} that $\pi_2$ is surjective and for some $M$, $$\sup_{x\in \partial\M} |\pi_2^{-1}(x)|\leq M.$$ As a consequence, $\pi_2$ preserves topological entropy $h(\cdot)$, i.e. $$h(E_2)=h(f|_{\partial\M}).$$ For a purely topological proof of this fact, see Chapter IX, Theorem 1.8 of \cite{Ro}.

\begin{theorem}\label{thm-dimH}
    In the setup as above, $$\dim_H(\partial\M)\leq\frac{h(f|_{\partial\M})}{\log \lambda}.$$ 
\end{theorem}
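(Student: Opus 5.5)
The plan is a covering argument: I will cover $\partial\M$ by images under $\pi_2$ of cylinder sets of $E_2$, and then bound the number and diameter of these sets. Write $h:=h(E_2)=h(f|_{\partial\M})$. For $n\in\N$, let $\mathcal{W}_n$ be the set of admissible words $w=(a_0,b_0)(a_1,b_1)\cdots(a_{n-1},b_{n-1})$ of length $n$ in $E_2$, and let $[w]\subset E_2$ be the corresponding cylinder. We need $\partial\M\subset\pi_2(E_2)$. A boundary point lies in two rectangles, and by Lemma \ref{lem:boundaries are invariant} its forward orbit stays in $\partial\M$, so each iterate lies in at least two rectangles. This gives a sequence of distinct pairs $(x_i,y_i)$ with $\pi(x)=\pi(y)$ equal to the point, as in Theorem 1.5 of \cite{AKS}. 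Hence $\partial\M\subset\bigcup_{w\in\mathcal{W}_n}\pi_2[w]$. Since $E_2$ is an SFT on a finite graph, $\#\mathcal{W}_n\le C e^{n(h+\delta)}$ for every $\delta>0$ and $n$ large; in fact $\#\mathcal{W}_n\le C'\rho^n$ with $\log\rho=h$, by Perron--Frobenius applied to the adjacency matrix.

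Next I bound the diameter. Take $z\in\pi_2[w]$, with $z=\pi(x_0x_1\cdots)$ where $x_0,\dots,x_{n-1}=a_0,\dots,a_{n-1}$ are fixed. Then $f^i z\in R_{a_i}$ for $0\le i<n$, so $\pi_2[w]\subset\bigcap_{i<n}f^{-i}R_{a_i}$. Let $z,z'$ be two points of this set. Since $\diam R<c/2=\varepsilon/2$, we have $d(f^iz,f^iz')<\varepsilon$ for all $i<n$. Iterating (*) gives
\[ d(f^{n-1}z,f^{n-1}z')\ge\lambda^{n-1}d(z,z'), \]
so $d(z,z')\le\lambda^{-(n-1)}\varepsilon$. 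Therefore every set in the cover has diameter at most $r_n:=\varepsilon\lambda^{1-n}$.

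Finally I compute the Hausdorff sum. For $s>(h+\delta)/\log\lambda$,
\[ \mathcal{H}^s_{r_n}(\partial\M)\le C e^{n(h+\delta)}\varepsilon^s\lambda^{s(1-n)}=C'\exp\bigl(n(h+\delta-s\log\lambda)\bigr)\to0. \]
So $\mathcal{H}^s(\partial\M)=0$, and hence $\dim_H\partial\M\le s$. Letting $\delta\to0$ and $s\downarrow h/\log\lambda$ gives the claim. The entropy equality $h(E_2)=h(f|_{\partial\M})$ is taken from the text preceding the statement. Because $\lambda$ and $\varepsilon$ refer to the adapted metric, which is bi-Lipschitz to the standard one, the bound transfers to the standard metric.

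I expect the main obstacle to be the inclusion $\partial\M\subset\pi_2(E_2)$: the definition of $E_2$ requires $a_i\neq b_i$ at \emph{every} time $i$. Points of $\partial\M$ should have two distinct codings that differ at every coordinate, and this is exactly what I would extract from \cite{AKS} Theorem 1.5, cited above for the surjectivity of $\pi_2$. If that extraction fails, I would instead cover $\partial\M$ by the sets $\pi_2[w]$ together with boundary-indexed cylinders $\bigcap_{i<n} f^{-i}(R_{a_i}\cap R_{b_i})$, counted directly by the words of $E_2$. A secondary point to check is that the diameter contraction (*) applies to all $n$ iterates at once, which follows from the smallness $\diam R<\varepsilon/2$.
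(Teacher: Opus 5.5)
Your proposal is correct and follows essentially the same route as the paper. Both arguments cover $\partial\M$ by the sets $\pi_2(C_w)$, $w\in B_n(E_2)$, bound their number by the growth rate of $E_2$ and their diameters by a constant times $\lambda^{-n}$, and let $n\to\infty$; both take the inclusion $\partial\M\subset\pi_2(E_2)$ from \cite{AKS} Theorem 1.5, as in the paper's setup. The differences are minor: you derive the diameter bound directly from (*) and $\diam R<\varepsilon/2$ instead of citing \cite{AKS} Lemma 3.2, and your $e^{n(h+\delta)}$ count needs no irreducibility of $E_2$, whereas the paper's $E\mu^n+o(\mu^n)$ and your unused side remark $\#\mathcal{W}_n\le C'\rho^n$ implicitly do.
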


\begin{proof} 

For a shift space $\Sigma$, write $B_n(\Sigma)$ for the \textit{allowed words} of length $n$ in $\Sigma$, that is, all sequences of length $n$ that appear in some point in $\Sigma$. Given a word $w$, let $C_w$ denote the cylinder set corresponding to the word in the appropriate subshift. Moreover, for a word $w$ allowed in $E_2$, let $w^1$ be the word formed by the first coordinates, and $w^2$ the word formed by the second coordinates.

First, for any $n\in \N$, 
$$\partial \M = \bigcup_{w\in B_n(E_2)} \pi_2(C_w) \subset \bigcup_{w\in B_n(E_2)} \pi(C_{w^1})\cup \pi(C_{w^2}).$$ We only need to consider words from $E_2$ since $f(\partial \M)\subset \partial \M$. Note that $C_w\subset E_2$, while $C_{w^1}$ and $C_{w^2}$ are subsets of $\Sigma_T$. 

By \cite{AKS} Lemma 3.2, there is some $D\in \R$ so that for any sufficiently long word $w'\in B_n(\Sigma_T)$, $$\operatorname{diam}(C_{w'}) \leq D \frac{1}{\lambda^n}.$$ 

Let $\mu\in \R$ be such that $h(E_2)=\log\mu$. Then $\# B_n(E_2) = E\mu^n + o(\mu^n)$ with $E\in \R$ by theorem 4.4.4 of \cite{LM}.

Hence, for fixed $s\in \R^+$, we have that \begin{align*}
\sum_{w\in B_n(E_2)} \operatorname{diam}(\pi_2(C_w))^s &\leq \sum_{w\in B_n(E_2)}\operatorname{diam}(\pi(C_{w^1})\cup \pi(C_{w^2}))^s \\
&\leq\sum_{w\in B_n(E_2)}\left(2D \frac{1}{\lambda^n}\right)^s  \\
&\leq \#B_n(E_2) \left(2D \frac{1}{\lambda^n}\right)^s \\
&= (2D)^s\frac{E\mu^n+o(\mu^n)}{\lambda^{ns}}
\end{align*}

As $n\to \infty$, this quantity goes to zero whenever $s>\frac{\log\mu}{\log\lambda}$, so $$\dim_H(\partial\M)\leq \frac{\log\mu}{\log\lambda} = \frac{h(E_2)}{\log\lambda}=\frac{h(f|_{\partial\M})}{\log\lambda},$$ as desired.

\end{proof}

It is possible to obtain equality: Consider the expanding toral endomorphism induced by $$A=\begin{pmatrix}
    2 & 0 \\ 0 & 2
\end{pmatrix}$$ on $\T^2$, with the Markov partition given by cubes of side length $1/5$. In this case, the adapted metric can be taken to be the standard metric, and $h(E_2)$ can be computed manually. Generally, the bound gets worse when the dimension or the difference in modulus between the eigenvalues of $A$ increases.

Staying with expanding toral endomorphisms in dimension 2, our result implies that the boundary of any Markov partition must be non-trivial in the sense of entropy.

\begin{corollary}
    For any Markov partition for an expanding toral endomorphism on $\T^2$, $h(f|_{\partial\M})>0$.
\end{corollary}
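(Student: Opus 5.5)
We argue by contradiction. Suppose $\M$ is a Markov partition for an expanding toral endomorphism $f=f_A$ on $\T^2$ with $h(f|_{\partial\M})=0$. By Theorem \ref{thm-dimH}, taken with an adapted metric bi-Lipschitz equivalent to the standard one, we get
$$\dim_H(\partial\M)\leq \frac{h(f|_{\partial\M})}{\log\lambda}=0,$$
so $\partial\M$ has Hausdorff dimension $0$ in the standard metric on $\T^2$. The goal is then to show that the boundary of a Markov partition of $\T^2$ must have Hausdorff dimension at least $1$. This contradicts the previous display and forces $h(f|_{\partial\M})>0$.

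For the lower bound we use a purely topological fact: a set of Hausdorff dimension $0$ (indeed, of zero $1$-dimensional Hausdorff measure) cannot separate the plane locally. We lift to $\R^2$ via Proposition \ref{prop:MPs give Markov tilings}, refining first if needed, to obtain a Markov tiling $\mathcal{C}$ whose boundary set $\partial\mathcal{C}=p^{-1}(\partial\M)$ is locally a finite union of translates of a compact set of dimension $0$. Since $\mathcal{C}$ consists of at least two tiles meeting a fundamental domain, the proper set $C$ and the closure of $\R^2\setminus C$ both have nonempty interior. Pick points $a\in\operatorname{int}C$ and $b\notin C$.

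The separation argument goes as follows. Consider the family of parallel segments $[a+s\vec u,\,b+s\vec u]$ for $s\in(-\delta,\delta)$, with $\vec u$ perpendicular to $b-a$ and $\delta$ small enough that every endpoint $a+s\vec u$ stays in $\operatorname{int}C$ and every $b+s\vec u$ stays outside $C$. Each such segment is connected and goes from $\operatorname{int}C$ to the complement of $C$, so it meets $\partial C\subset\partial\mathcal{C}$. The orthogonal projection onto the line $\R\vec u$ is Lipschitz, so it maps $\partial C$ onto a set containing the interval $(-\delta,\delta)\vec u$. A Lipschitz image of a set of Hausdorff dimension $0$ has dimension $0$, but an interval has dimension $1$. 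Hence $\dim_H\partial\M\geq 1$, which is the required contradiction.

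We expect the main obstacle to be the careful passage between the torus and the plane and between the metrics: in particular, checking that Hausdorff dimension is preserved under the bi-Lipschitz change to the adapted metric and under the local isometry $p$. Both are routine, since $p$ restricted to small balls is an isometry and the paper already notes that the adapted metric is bi-Lipschitz to the standard one. A further small point is that Theorem \ref{thm-dimH} assumes a refined partition, but refinement only enlarges the boundary by preimage pieces, and the entropy $h(f|_{\partial\M})$ is what is being bounded. It is in any case cleaner to run the argument directly on the refined partition $\M'$: since $\partial\M\subset\partial\M'$, we have $\dim_H\partial\M'\geq1$, while the entropies agree because the refined boundary is a finite union of preimages of the original boundary under the finite-to-one map $f$.
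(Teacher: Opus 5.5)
Your proof is correct and has the same skeleton as the paper's: a lower bound $\dim_H(\partial\M)\geq 1$ fed into Theorem \ref{thm-dimH}. The difference is that you obtain the lower bound by a more elementary route.

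The paper simply writes $\dim_H(\partial\M)\geq\dim_{top}(\partial\M)=1$. This silently invokes two facts from dimension theory: the classical inequality $\dim_H\geq\dim_{top}$, and the fact that a closed set separating open subsets of a surface cannot be $0$-dimensional. Your projection argument replaces both. Each parallel segment from $\operatorname{int}C$ to $\R^2\setminus C$ meets $\partial C$, so the orthogonal projection onto $\R\vec u$ sends $\partial C$ onto a set containing a nondegenerate interval. Since Lipschitz maps do not increase Hausdorff dimension, this gives $\dim_H\partial C\geq 1$, and in fact even $\mathcal{H}^1(\partial\M)>0$. (For unit $\vec u$ the interval is $(a\cdot\vec u)\vec u+(-\delta,\delta)\vec u$ rather than $(-\delta,\delta)\vec u$ itself, which is harmless.)

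You also address a point the paper passes over: Theorem \ref{thm-dimH} is proved for a refined partition $\M'$. Your justification can be made precise as follows. One has $\partial\M\subset\partial\M'\subset f^{-N}(\partial\M)$. Moreover, $f^N$ is a surjective, finite-to-one map from $f^{-N}(\partial\M)$ onto $\partial\M$ satisfying $f^N\circ f=f\circ f^N$. Bowen's inequality for finite-to-one factors then gives $h(f|_{\partial\M'})\leq h(f|_{\partial\M})$, which is the direction you actually need.

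Two minor simplifications. The contradiction framing is unnecessary, since your argument yields $h(f|_{\partial\M})\geq\log\lambda$ directly, exactly as in the paper. You also do not need two tiles: every tile is compact, so its complement is automatically nonempty.
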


\begin{proof}
    Since $\dim_H(\partial\M)\geq \dim_{top}(\partial\M) = 1$, Theorem \ref{thm-dimH} gives $$h(f|_{\partial\M})\geq \log\lambda > 0.$$
\end{proof}

\begin{remark}
    With some small adjustments, the approach to computing Hausdorff dimension in Theorem \ref{thm-dimH} can be extended to the invertible setting. The definition of a Markov partition is more complex, and this setting will be reserved for future work.
\end{remark}


\newpage

\begin{thebibliography}{WWWWW}

    \bibitem[AKS]{AKS} J. Ashley, B. Kitchens, and M. Stafford, {\em Boundaries of Markov partitions}, Trans. Amer. Math. Soc. {\bf 333}, 1 (1992), 177-201.

    \bibitem[AW]{AW} R. L. Adler and B. Weiss, {\em Similarity of automorphisms of the torus}, Mem. Amer. Math, Soc.,(1970).

    \bibitem[Be]{Be} T. Bedford, {\em Generating special Markov partitions using fractals}, Ergod. Th.  Dynam. Sys. {\bf 6} (1986), 325-333. 
    
    \bibitem[Bo1]{Bo1} R. Bowen, {\em Equilibrium States and the Ergodic Theory of Anosov Diffeomorphisms}, 2nd ed., Lect. Notes Math. {\bf 470} (2008). 

    \bibitem[Bo2]{Bo2} R. Bowen, {\em Markov partitions are not smooth}, Proc. Amer. Math. Soc. {\bf 71} (1978), 130-132.

    \bibitem[Bo3]{Bo3} R. Bowen, {\em Markov Partitions and Minimal Sets for Axiom A Diffeomorphisms}, Amer. Jour. Math. (1970), 907-918.
    
    \bibitem[C]{C} E. Cawley, {\em Smooth Markov partitions and toral automorphisms}, Ergod. Th. Dynam. Syst. {\bf 11} (1991), 633-651.

    \bibitem[CR]{CR} E. Coven and W. Reddy, {\em Positively expansive maps of compact manifolds}.  In: Global Theory of Dynamical Systems, Ed: Nitecki, Z., Robinson, C.. Lect. Notes in Math. {\bf 819} (1980). 

    \bibitem[D]{D} F. Dekking, {\em Recurrent sets}, Advances in Math. {\bf 44} (1982), 78-104.

    \bibitem[F]{F} J. M. Franks, {\em Invariant sets of hyperbolic toral automorphisms}, Amer. J. Math. {\bf 99}, 5 (1977), 1089-1095.
    
    \bibitem[IO]{IO} Y. Ishii and T. Oka, {\em On the Hausdorff dimension of the recurrent sets induced from endomorphisms of free groups},
    J. Fractal Geom. {\bf 9} (2022), 171–192. 

    \bibitem[K]{K} R. Koo, {\em A Classification of Matrices of Finite Order over $\C$, $\R$, and $\Q$}, Math. Mag. {\bf 76}, 2 (2003), 143-148.

    \bibitem[KP]{KP} J. Kuzmanovich and A. Pavlichenkov, {\em Finite Groups of Matrices Whose Entries Are Integers}, Amer. Math. Month. {\bf 109}, 2 (2002), 173-186.

    \bibitem[LM]{LM} D. Lind and B. Marcus, {\em Introduction to Symbolic Dynamics and Coding}, 2nd ed. (2021).

    \bibitem[Ma]{Ma} R. Mañé, {\em Orbits of paths under hyperbolic toral automorphisms}, Proc. Amer. Math. Soc. {\bf 73}, 1 (1979), 121-125.

    \bibitem[M1]{M1} A. Manning, {\em A relation between Lyapunov exponents, Hausdorff dimension and entropy}, Ergod. Th. Dynam. Sys. {\bf 1}, 4 (1981), 451-459.

    \bibitem[M2]{M2} A. Manning, {\em Axiom A diffeomorphisms have rational zeta functions}, Bull. Lon. Math. Soc. {\bf 3}, 2 (1971), 215-220.

    \bibitem[Ro]{Ro} C. Robinson. {\em Dynamical Systems: Stability, Symbolic Dynamics and Chaos}, (1995).
   
    \bibitem[Ru]{Ru} D. Ruelle. {\em Thermodynamic Formalism}, 2nd ed. (2004).
    
    \bibitem[Si]{Si} Ya. G. Sinai, {\em Construction of markov partitions},  Func. Ana. App. {\bf 2}, 3 (1968), 245-253.

    \bibitem[U]{U} M. Urbánski, {\em On the capacity of a continuum with non-dense orbit under a hyperbolic toral automorphism}, Studia Math. {\bf 81} (1985), 37-51.

    \bibitem[URM]{URM} M. Urbánski, M. Roy, and S. Munday, {\em Non-Invertible Dynamical Systems. Volume 1: Ergodic Theory – Finite and Infinite, Thermodynamic Formalism, Symbolic Dynamics and Distance Expanding Maps}, (2022). 

    \bibitem[Y]{Y} L.-S. Young, {\em Dimension, entropy and Lyapunov exponents}, Ergod. Th. Dynam. Sys. {\bf 2}, 1 (1982), 109-124.


	
\end{thebibliography}
\end{document}